\newlength{\hchng}
\newlength{\vchng}
\newtheorem{thm}{Theorem}[section]
\newtheorem{prop}[thm]{Proposition}
\newtheorem{lemma}[thm]{Lemma}
\newtheorem{preremark}[thm]{Remark}
\newenvironment{remark}{\begin{preremark}\rm}{\medskip \end{preremark}}
\numberwithin{equation}{section}
\newcommand{\begeqa}{\begin{eqnarray}}
\newcommand{\eneqa}{\end{eqnarray}}
\newcommand{\begeqaet}{\begin{eqnarray*}}
\newcommand{\eneqaet}{\end{eqnarray*}}
\newcommand{\beeq}{\begin{equation}}
\newcommand{\eeq}{\end{equation}}
\newcommand{\beeqs}{\begin{equation*}}
\newcommand{\eeqs}{\end{equation*}}
\newcommand{\rn}{\mathbb{R}^n}
\title{Sharp boundary and global regularity for degenerate fully nonlinear elliptic equations}
\author{ Dami\~{a}o J. Ara\'ujo \; {and} \; Boyan Sirakov }
\date{}
\begin{document}
\maketitle

\begin{abstract}

We obtain optimal boundary and global regularity estimates  for viscosity solutions of fully nonlinear elliptic equations whose ellipticity degenerates at the critical points of a given solution. We show that any solution is $C^{1,\alpha}$ on the boundary of the domain, for an optimal and explicit $\alpha$ given only in terms of the regularity of the boundary datum and the elliptic degeneracy degree, no matter how possibly low is the interior regularity for that class of equations. We also obtain sharp global estimates. Our findings are new even for model equations, involving only a degenerate Laplacian; all previous results of global nature give $C^{1,\alpha}$ regularity only for some small $\alpha>0$.

%\medskip

%\noindent \textbf{Keywords:} boundary regularity, optimal regularity, degenerate pdes.

%\medskip

%\noindent \textbf{2010 AMS Subject Classifications:} 35B65, 35J25, 35J70
\end{abstract}

%S\tableofcontents

%%%%%%%%%%%%%%%%%%%%%%
\section{Introduction}
%%%%%%%%%%%%%%%%%%%%%%

The main goal of this work is to derive sharp boundary and global estimates for viscosity solutions of degenerate fully nonlinear elliptic equations modeled on
\begin{equation}\label{m-eq}
|Du|^\gamma F(D^2 u) = f(x),
\end{equation}
in a smooth bounded domain $\Omega \subset \mathbb{R}^n$, for a given Dirichlet boundary datum on a part of  $\partial \Omega$. Here $\gamma\ge0$, and the second order operator $F$ is assumed to be uniformly elliptic and Lipschitz continuous: there exist positive constants $0<\lambda \leq \Lambda$ such that
\begin{equation}\label{H}
\mathcal{M}^+_{\lambda,\Lambda}(M-N) \geq F(M)-F(N) \geq \mathcal{M}^-_{\lambda,\Lambda}(M-N),
\end{equation}
for any symmetric matrices $M,N$, where $\mathcal{M}^\pm_{\lambda,\Lambda}$ denote the classical extremal Pucci operators. In this paper we consider $C$-viscosity solutions in the sense of \cite{CIL}, all functions are considered to be continuous up to a part of $\partial\Omega$. These equations have received a lot of attention in the recent years (an extensive list of references will be given below), since they play the same role in the theory of quasi-linear elliptic equations in non-divergence form as equations based on the $p$-Laplacian do in the divergence framework. More precisely, comparing
$$
\mathrm{div}(A(x,u,Du)Du) = f(x)\quad \mbox{ vs. } \quad \mathrm{tr}(A(x,u,Du)D^2u) = f(x),
$$
we see that setting $A(x,u,Du) = |Du|^\gamma\,I$ in the former gives the $p$-Poisson equation with $p=\gamma+2$, and in the latter the equation \eqref{m-eq} with $F(D^2u)=\Delta u$. The results below are new even for $|Du|^\gamma \Delta u = f(x)$.

Equation \eqref{m-eq} degenerates along the set $\mathcal{C}(u)$ of critical points of any given solution $u\in C^1(\Omega)$. This fact has a strong effect on the  smoothness of solutions, and we cannot expect the same regularity  as in the uniformly elliptic setting. Actually, if $\gamma=0$, and $F$, $f$ are smooth, any solution is smooth too, while for $\gamma>0$ there is a natural restriction on the regularity of solutions, independently of how nice $F$ and $f$ are. Specifically, the function
$$
\xi(x) = c\, x_n^{1+ \frac{1}{1+\gamma}}
$$
is exactly $C^{1,\frac{1}{1+\gamma}}$ at $\{x_n=0\}$  and solves the boundary value problem
\begin{equation}\label{m-bvp}
\left\{
\begin{array}{rclcl}
|D\xi|^\gamma F(D^2 \xi) &=& 1&\mbox{ in }& \Omega\\
\xi&=&0&\mbox{ on }& \partial\Omega,
\end{array}
\right.
\end{equation}
for $\Omega=\rn_+=\{x_n>0\}$ and some $c=c(\gamma,F)>0$.\smallskip

The essence of our main result below is that this ``simplest" solution $\xi$ is actually the worst in terms of boundary regularity. For any uniformly elliptic $F$, any continuous $f$, and any sufficiently smooth boundary data, the solutions are at least $C^{1,\frac{1}{1+\gamma}}$ at the boundary. If in addition $F$ is convex or concave, any solution belongs to $C^{1,\frac{1}{1+\gamma}}(\overline{\Omega})$.\smallskip

The theory of equations like \eqref{m-eq} has seen important advances during the last two decades. In the pioneering works \cite{BD01}-\cite{BD04} existence and uniqueness, maximum principles, Harnack type inequalities and H\"older regularity were obtained; further important results are contained in \cite{DFQ1}-\cite{DFQ2}, \cite{I}, \cite{IS2}. A breakthrough  of \cite{IS} establishes that viscosity solutions of \eqref{m-eq} are locally $C^{1,\alpha}$ for some universal small exponent $\alpha$ depending on $\gamma$, the ellipticity and the dimension.
%This was extended in \cite{ART1} to general equations as in \eqref{m-geneq} below.
A  boundary extension of the result in \cite{IS}, and global regularity estimates were obtained in \cite{BD1,BD2}, see also \cite{BD3}. Specifically, in these works it is proved that solutions are $C^{1,\alpha}$ up to the boundary for some {\it small} $\alpha>0$ depending additionally on the regularity of the boundary $\partial \Omega$, as well as on the boundary data.

A natural question appears: how smooth are viscosity solutions of \eqref{m-eq}? Exact regularity measures the rate at which a solution may separate from its tangent planes (see \eqref{thmest} below) and is important for  geometric estimates and blow-up analysis, for instance in problems of free boundary type. A first observation towards the answer to this question is that, when $f\equiv 0$, viscosity solutions of \eqref{m-eq} are also viscosity solutions of $F(D^2u)=0$ (see \cite{IS}). Therefore, the optimal regularity for \eqref{m-eq} cannot be better than the optimal  regularity for solutions of $F(D^2u)=0$.  We know  (see for instance \cite{CC}) that for each $F=F(M)$ there is a  number $\alpha_F=\alpha(n,\lambda,\Lambda)>0$ such that any solution of $F(D^2u)=0$ is locally in $C^{1,\alpha_F}$. If $F$ is convex or concave then $\alpha_F=1$, by the Evans-Krylov theorem. On the other hand  the examples in \cite{NV} show  that for $n\ge5$ and any $\beta>0$ there exists $F_\beta=F_\beta(M)$ satisfying \eqref{H} such that the equation $F_\beta(D^2u)=0$ has a solution in the unit ball  which does not belong to $C^{1,\beta}$  at the center of the ball, that is,
$$
\inf \{\alpha_F \; | \, F \mbox{ satisfies } \eqref{H} \mbox{ for some } 0<\lambda\le \Lambda\} = 0.
$$

 An almost exact  {\it interior} regularity result for the degenerate equation \eqref{m-eq}, and the more general \eqref{m-geneq} below, was obtained in \cite{ART1}. Specifically, viscosity solutions  are locally in $C^{1,\sigma}$, with
\begin{equation}\label{intexp}
\sigma=\min\left\{ \tau\,, \frac{1}{1+\gamma} \right\} \; \mbox{ for any } \; \tau < \alpha_F.
\end{equation}
%Note that $\sigma$ in \eqref{intexp} is given only in terms of the elliptic degeneracy degree $\gamma$ and $\alpha_F$.

In this paper we address the question of optimal boundary regularity for viscosity solutions of  degenerate elliptic equations such as \eqref{m-eq}. We prove that, for a given $C^{1,\alpha'}$ boundary datum, on smooth parts of the boundary $\partial \Omega$  solutions are precisely $C^{1,\alpha}$, with
\begin{equation}\label{boundexp}
\alpha= \min \left\{\alpha\,',\frac{1}{1+\gamma} \right\}.
\end{equation}
One can think of this as follows: given a point on a smooth hypersurface on the side of which we have a solution $u$ of \eqref{m-eq}, the order of the Taylor expansion of $u$ in the normal direction is (at least) the smaller number between the order in the tangential directions and the one in the normal direction of $\xi$ given above.

Note that $\alpha$ in \eqref{boundexp} is exact and explicit, there is not a generally unknown constant such as $\alpha_F$, nor a strict inequality (compare with \eqref{intexp}). Apart from being optimal, our result shows that the interior regularity for solutions of \eqref{m-eq} has no influence on their boundary regularity. This is quite substantial, from the remarks above it follows that  for any small $\varepsilon>0$, one can exhibit  equations of the type \eqref{m-eq} which have boundary $C^{1,1-\varepsilon}$ regularity, and have solutions which are not $C^{1,\varepsilon}$ in the interior.

\smallskip

We set some notations and hypotheses to be used throughout the article. We denote $\Omega^+_r(x) := \Omega \cap B_r(x)$, $\Omega'_r(x):=\partial \Omega \cap B_r(x)$, where $B_r(x)$ is the  ball of center $x$ with radius $r>0$. In particular,  $\Omega^+_r := \Omega \cap B_r(0)$, $\Omega'_r:=\partial \Omega \cap B_r(0)$. Without restriction $0 \in \partial\Omega$. We also assume  that $\Omega$ is a $C^{2}$-domain and $f\in C(\overline{\Omega})$. All constants $C$ will be allowed to depend on $n,\gamma, \lambda, \Lambda$,  the maximal curvature of $\partial\Omega$, the assumed regularity $\alpha'$ on $\partial \Omega$, and, in the case of global estimates, the fixed exponent $\tau<\alpha_F$. %For a given viscosity solution of \eqref{m-eq} always denoted by $u$, constants $C,c>0$ shall be called universal if depend only on  $d,\gamma, \lambda, \Lambda$, $\|u\|_{L^\infty(\Omega_1^+)}$, $\|f\|_{L^\infty(\Omega_1^+)}$, $\|u\|_{C^{1,\alpha\,'}(\Omega_1')}$ and the maximal curvature of $\Omega$.

\medskip

The following is our main result in the particular case of \eqref{m-eq}.

%%%%%%
\begin{thm} \label{mainthm}
Suppose $F$ satisfies \eqref{H}.  Let $u$ be a viscosity solution of \eqref{m-eq} in $\Omega$, such that the restriction $g = u|_{\partial\Omega} \in C^{1,\alpha\,'}(\Omega'_1)$, for some $0<\alpha\,'<1$. Then for
\begin{equation}\label{exp}
\alpha=\min\left\{\alpha\,', \frac{1}{1+\gamma}\right\}, \qquad \beta= \min\{\alpha,\tau\}\;\mbox{ with } \: \tau<\alpha_F,
\end{equation} and for some constant $C$ we have
\begin{equation}\label{thmest}
|u(x)-u(x_0)-Du(x_0)\cdot (x-x_0)| \leq C A \,|x-x_0|^{1+\alpha}, \quad \mbox{ for }x\in \Omega^+_{1/2},\; x_0 \in \Omega'_{1/2},
\end{equation}
as well as
\begin{equation}\label{thmest2}
\|u\|_{C^{1,\beta}(\Omega^+_{1/2})}\le CA,
\end{equation}
where $A=\|u\|_{L^\infty(\Omega_1^+)}+\|g\|_{C^{1,\alpha\,'}(\Omega_1')} +\|f\|_{L^\infty(\Omega_1^+)}^{\frac{1}{1+\gamma}}$.
\end{thm}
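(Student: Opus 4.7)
The plan is to prove the pointwise boundary expansion \eqref{thmest} at a generic point of $\Omega'_{1/2}$ by a discrete iterative approximation scheme, and then to deduce the global bound \eqref{thmest2} by combining it with the interior $C^{1,\tau}$ estimate of \cite{ART1}. After a $C^2$-diffeomorphism straightening $\partial\Omega$ near the reference point, we work in the half-ball $B_1^+$ with flat boundary $T_1 = B_1 \cap \{x_n=0\}$; this transforms $F$ into an operator with the same ellipticity constants, up to curvature-controlled lower-order gradient perturbations that cause no harm. Subtracting the tangent plane of $g$ at the reference point and dividing by $A$, we may further assume $g(0)=0$, $Dg(0)=0$, $|g(x')|\leq |x'|^{1+\alpha'}$ on $T_1$, with $\|u\|_\infty$ and $\|f\|_\infty$ as small as we wish.

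The core inductive claim is the existence of universal $\rho\in(0,1)$, $C>0$ and affine functions $\ell_k(x)=a_k+b_k\cdot x$ with
\beeqs
|a_{k+1}-a_k|+\rho^k|b_{k+1}-b_k|\leq C\rho^{k(1+\alpha)},\qquad \sup_{B_{\rho^k}^+}|u-\ell_k|\leq \rho^{k(1+\alpha)}.
\eeqs
Passing $k\to\infty$ gives the tangent plane at $0$ with the claimed decay rate, proving \eqref{thmest}. The step from $k$ to $k+1$ is reduced to its $k=0$ instance by the rescaling $v(y)=\rho^{-k(1+\alpha)}[u(\rho^k y)-\ell_k(\rho^k y)]$: a direct computation shows $v$ is a viscosity solution in $B_1^+$ of
\beeqs
|\rho^{k\alpha}Dv+b_k|^\gamma \tilde F_k(D^2v) = \rho^{k(1-\alpha)}f(\rho^k y),
\eeqs
with $\tilde F_k$ uniformly elliptic with the constants $\lambda,\Lambda$. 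The constraint $\alpha\leq 1/(1+\gamma)$ ensures that, after factoring out either $\rho^{k\alpha\gamma}$ (when $|b_k|\leq \rho^{k\alpha}$) or $|b_k|^\gamma$ (when $|b_k|>\rho^{k\alpha}$), we recover an equation for $v$ whose right-hand side remains uniformly bounded, and whose gradient nonlinearity reduces either to the degenerate form of the same class or to a uniformly elliptic equation.

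The next ingredient is a compactness/stability argument: by the equivalence between $|Dw|^\gamma F(D^2 w)=0$ and $F(D^2 w)=0$ in the viscosity sense (see \cite{IS}) and stability of viscosity solutions, the normalized family $\{v\}$ can be uniformly approximated on $B_{1/2}^+$ by the solution $h$ of $F(D^2 h)=0$ in $B_1^+$ with $h|_{T_1}=v|_{T_1}$, provided the initial smallness of $f$ is appropriately chosen. Boundary regularity for $h$ at the flat face $T_1$ with its small $C^{1,\alpha'}$ trace furnishes $h\in C^{1,\bar\alpha}$ near $0$ with an exponent $\bar\alpha>\alpha$; its tangent plane at $0$, rescaled back, yields $\ell_{k+1}$ and closes the induction once $\rho$ is small enough. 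Finally, \eqref{thmest2} follows by combining \eqref{thmest}, now available at every point of $\Omega'_{1/2}$, with the interior estimate \eqref{intexp} of \cite{ART1} through a standard distance-to-boundary Campanato-type telescoping: at an interior $y\in\Omega_{1/2}^+$ with $d=\dist(y,\partial\Omega)$, the boundary estimate at the nearest boundary point controls the affine approximation of $u$ on $B_{d/2}(y)$, while the interior $C^{1,\tau}$ oscillation estimate rescaled to $B_{d/2}(y)$ controls the H\"older quotient of $Du$ at smaller scales; interpolation yields global $C^{1,\beta}$ with $\beta=\min\{\alpha,\tau\}$.

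The main expected obstacle is the borderline regime $\alpha=1/(1+\gamma)$: the source in the rescaled equation does not decay with $k$, so the required approximation by a solution of $F(D^2 h)=0$ cannot rely on making $f$ small through scaling, and for non-convex $F$ the generic interior exponent $\alpha_F$ may be strictly smaller than $\alpha$, so interior regularity applied to $h$ alone does not suffice. Overcoming this demands a genuinely boundary-adapted regularity estimate for $h$ at the flat face $T_1$, producing a H\"older exponent $\bar\alpha$ arbitrarily close to $1$ via barriers linear in $x_n$ combined with Krylov-type boundary estimates; it is precisely this refinement that delivers the sharpness of the exponent in \eqref{boundexp}.
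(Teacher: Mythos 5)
Your overall strategy is sound and it does reach the right conclusion, but it is organized quite differently from the paper. You run a single improvement-of-flatness iteration with affine corrections $\ell_k=a_k+b_k\cdot x$ and resolve the degeneracy \emph{inside} the iteration by a dichotomy on $|b_k|$ versus $\rho^{k\alpha}$. The paper instead exploits upfront the a priori global $C^{1,\bar\omega}$ estimates of \cite{BD1,BD2}: since $Du(x_0)$ exists and has a universal modulus of continuity, one makes the dichotomy on the actual quantity $|Du(x_0)|$ versus $\delta_0\rho^{\alpha}$. In the small-gradient regime the iteration (Proposition \ref{kstep}) needs no affine corrections at all --- one only subtracts a constant, so the equation keeps the exact form $H(|Du|,x)F=f$ --- and in the large-gradient regime the modulus of continuity forces $|Du|\sim\delta_0\kappa^\alpha$ on a whole rescaled neighborhood, so the equation is uniformly elliptic there and the optimal uniformly elliptic boundary estimate (Theorem \ref{unifthm}) applies. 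The advantage of the paper's splitting is precisely that it never has to deal with the shifted nonlinearity $|\rho^{k\alpha}Dv+b_k|^{\gamma}$ except in the limit, where \cite[Lemma 6]{IS} (Lemma \ref{genmult}) removes it. Your route is viable, but you should make explicit that the compactness step for the family $v$ requires equicontinuity estimates that are \emph{uniform in $k$ and in the shift $b_k$}; this is exactly the global $C^{1,\alpha_0}$ estimate of \cite{BD1,BD2}, which is the essential external engine of the whole proof and is only implicit in your write-up. Also, your ``main expected obstacle'' in the borderline case $\alpha=1/(1+\gamma)$ is not really an obstacle: one makes $\|f\|_\infty$ small once and for all by the initial normalization $u\mapsto u/A$, and the scaling exponent $1-\alpha(1+\gamma)\ge 0$ merely \emph{preserves} this smallness, which is all the approximation lemma needs; the $\rho^2$-decay of \cite[Lemma 4.1]{SS} for the limit profile (with vanishing trace) then beats $\rho^{1+\alpha}$ for every admissible $\alpha<1$.

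The one place where your sketch has a genuine gap is the global estimate \eqref{thmest2}. The ``standard Campanato telescoping'' does not go through verbatim, for two reasons. First, to apply the interior estimate of \cite{ART1} on $B_{d}(x)$ with the correct constant you must first subtract the tangent plane of $u$ at the nearest boundary point $\bar x$; but $u-L^u_{\bar x}$ solves $|Dw+Du(\bar x)|^{\gamma}F(D^2w)=f$, which is \emph{not} of the class covered by \eqref{HH}, so \cite{ART1} does not apply to it directly. The paper circumvents this by adding back a linear function of the right slope (the function $\tilde w$ in Section \ref{finalproof}), so that $D\tilde w=d_0\,Du$ and the equation regains the form $H(|D\tilde w|,\cdot)\tilde F=d_0^{2+\gamma}\tilde f$. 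Second, the telescoping only yields the bound $C|y-x|^{1+\beta}$ when $\|u-L^u_{\bar x}\|_{L^\infty(B_{d}(x))}\le Cd^{1+\alpha}$ \emph{and} $|Du(\bar x)|\le Cd^{\alpha}$; in the complementary regime $d<\mu\kappa/2$ the linear part $d\,|Du(\bar x)|$ dominates $d^{1+\alpha}$ and the degenerate interior estimate gives a worse constant, so one must instead use that the equation is uniformly elliptic on $B_{\mu\kappa}$ and invoke the global estimate of Theorem \ref{unifthm} there. You set up exactly this dichotomy for the boundary expansion, so the fix is available to you, but as written the interpolation step in your last paragraph on \eqref{thmest2} would fail near boundary points where the gradient is large relative to $\dist(x,\partial\Omega)^{\alpha}$.
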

%%%%%%

Next, we give a more general version of this theorem, for the degenerate equation
\begin{equation}\label{m-geneq}
H(|Du|,x) F(D^2 u,x) = f(x) \quad \mbox{in } \; \Omega
\end{equation}
(see also Remark \ref{norestr} below), where $F$ and $H$ are continuous, and for some  modulus of continuity $\omega(s)$
 \begin{equation}\label{HH}
\left\{
\begin{array}{c}
F(\cdot,x) \mbox{ satisfies } \eqref{H},\qquad |F(M,x)-F(M,y)| \leq \omega(|x-y|)|M|, \\
\lambda |p|^\gamma\le H(p,x)\le \Lambda |p|^\gamma, \qquad M,N \in Sym(\mathbb{R}^n), p\in\mathbb{R}^n, x,y\in \overline{\Omega}.
\end{array}
\right.
\end{equation}

We say that \eqref{m-geneq} admits global $C^{1,\bar\omega}$-estimates if each viscosity solution of \eqref{m-geneq} in $\Omega$ with a $C^{1,\alpha'}$ boundary datum  is $C^1$ up to the boundary and \eqref{thmest} holds with $|x-x_0|^{1+\alpha}$ replaced by $|x-x_0|\bar\omega(|x-x_0|)$, for $x,x_0\in \Omega^+_{1}$.

\begin{thm}\label{genthm}
Assume \eqref{HH} and that \eqref{m-geneq} admits global $C^{1,\bar\omega}$-estimates, for some modulus~$\bar\omega(s)$. Let $u$ be a viscosity solution of \eqref{m-geneq}, with $g = u|_{\partial\Omega} \in C^{1,\alpha\,'}(\Omega'_1)$,  $0<\alpha\,'<1$. Then \eqref{exp}-\eqref{thmest2} hold, with $\alpha_F=\inf_{x_0\in\overline{\Omega}}\alpha_{F(\cdot,x_0)}$, and  $C$  depending also on $\omega$, $\bar\omega$.
\end{thm}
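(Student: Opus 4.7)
The plan is to combine a compactness-based approximation lemma with a dyadic iteration at each boundary point, using the assumed global $C^{1,\bar\omega}$ estimate only on the homogeneous limit problem; the sharp exponent in \eqref{exp} then emerges from pure scaling.

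\textbf{Reduction.} Fix $x_0\in\Omega'_{1/2}$ and flatten $\partial\Omega$ near $x_0$ by a $C^2$ diffeomorphism; the transformed equation still has the form \eqref{m-geneq} with coefficients satisfying \eqref{HH}. After subtracting from $u$ the tangent hyperplane of $g$ at $x_0$ and suitably rescaling, the estimate \eqref{thmest} at $x_0$ reduces to the following one-step improvement of flatness: for $\alpha$ as in \eqref{exp}, there exist $\rho\in(0,1/2)$ and $\delta>0$, depending only on the parameters, such that if $u$ solves \eqref{m-geneq} in $B_1\cap\{x_n>0\}$ with $\|u\|_\infty\le 1$, $\|f\|_\infty\le\delta^{1+\gamma}$, and $g=u|_{\{x_n=0\}}$ obeys $g(0)=0$, $Dg(0)=0$, $\|g\|_{C^{1,\alpha'}}\le\delta$, then there exists $a\in\R$ with $|a|\le C_0$ and
\[
\sup_{B_\rho\cap\{x_n>0\}} |u(x)-a\,x_n| \le \rho^{1+\alpha}.
\]

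\textbf{Approximation.} This improvement is proved by contradiction. If it fails, there exist sequences $u_j, F_j, H_j, f_j, g_j$ with $\delta_j\to 0$ violating the conclusion. The interior $C^{1,\sigma}$ bound of \cite{ART1} together with the boundary H\"older estimates of \cite{BD1,BD2} (applicable because $g_j\to 0$ in $C^{1,\alpha'}$) provide a uniform modulus of continuity for $\{u_j\}$ up to $B_1\cap\{x_n=0\}$, so a subsequence converges locally uniformly to some $u_\infty$. By stability of viscosity solutions and a diagonal extraction for the coefficients, $u_\infty$ solves a limiting equation of the form \eqref{m-geneq} with $f\equiv 0$ and $u_\infty\equiv 0$ on $B_1\cap\{x_n=0\}$. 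By hypothesis this limit enjoys global $C^{1,\bar\omega}$ estimates, so $Du_\infty(0)=a_\infty e_n$ with $|a_\infty|\le C_0$ and $|u_\infty(x)-a_\infty x_n|\le C_1\,|x|\,\bar\omega(|x|)$ on $B_{1/2}\cap\{x_n>0\}$. Choosing $\rho$ small enough that $C_1\bar\omega(\rho)\le\tfrac{1}{2}\rho^\alpha$ and letting $j\to\infty$ produces an approximation of $u_j$ by $a_\infty x_n$ on $B_\rho\cap\{x_n>0\}$ within $\rho^{1+\alpha}$, contradicting our hypothesis.

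\textbf{Iteration and global estimate.} A standard dyadic iteration of the improvement at scales $\rho^k$ yields affine $\ell_k(x)=a_k\cdot x$ with $|a_k-a_{k-1}|\le C\rho^{k\alpha}$ and $\sup_{B_{\rho^k}\cap\{x_n>0\}}|u-\ell_k|\le \rho^{k(1+\alpha)}$. Passing to the limit in $k$ and summing a geometric series yields \eqref{thmest} at $x_0$ with a constant uniform in $x_0\in\Omega'_{1/2}$. For \eqref{thmest2}, combine this pointwise boundary bound with the interior $C^{1,\sigma}$ estimate of \cite{ART1} (with $\sigma=\min\{\tau,1/(1+\gamma)\}$) via the standard dichotomy on $|x-y|$ versus $\dist(\{x,y\},\partial\Omega)$, producing the global $C^{1,\beta}$ bound with $\beta=\min\{\alpha,\tau\}$.

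\textbf{Main difficulty.} The delicate step is the stability in the contradiction argument: the $u_j$ must be shown to converge up to the boundary with a limit staying in the class for which the assumed $C^{1,\bar\omega}$ estimate applies. This requires Hopf-type barriers for degenerate operators controlling boundary oscillations independently of the degeneracy, together with a careful passage to the limit for the coefficients $F_j,H_j$ preserving the structural conditions \eqref{HH} and the $C^{1,\bar\omega}$ hypothesis.
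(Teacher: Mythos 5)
Your overall architecture (compactness approximation plus dyadic iteration, with the assumed $C^{1,\bar\omega}$ estimate supplying compactness) is the same as the paper's, but the key approximation step as you wrote it fails. You apply the hypothesized global $C^{1,\bar\omega}$ estimate to the \emph{limit profile} $u_\infty$ to get $|u_\infty(x)-a_\infty x_n|\le C_1|x|\bar\omega(|x|)$, and then you need to choose $\rho$ with $C_1\bar\omega(\rho)\le\tfrac12\rho^{\alpha}$. This is impossible in exactly the situation the theorem is designed for: the available estimates (e.g.\ from \cite{BD1,BD2}) give $\bar\omega(s)=s^{\beta_0}$ with a small universal $\beta_0$, typically $\beta_0<\alpha$, so $\bar\omega(\rho)/\rho^{\alpha}\to\infty$ as $\rho\to0$ and no admissible $\rho$ exists. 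The ``minimal'' $C^{1,\bar\omega}$ regularity can never, by itself, produce the optimal decay rate of the blow-up limit. The missing idea is the paper's Lemma \ref{genmult} (the extension of \cite[Lemma 6]{IS}): since the limiting equation has $f\equiv0$, the degenerate factor cancels and $u_\infty$ actually solves the \emph{uniformly elliptic} equation $F_\infty(D^2u_\infty)=0$; then \cite[Lemma 4.1]{SS}, applied to $u_\infty-a_\infty x_n$ (which vanishes on the flat boundary with zero gradient at the origin), gives the quadratic decay $\sup_{B_\rho^+}|u_\infty-a_\infty x_n|\le \tilde C\rho^2$, and $\rho^2$ beats $\rho^{1+\alpha}$ for small $\rho$. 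In the paper this is set up so that $a_\infty=0$, because the compactness lemma is only invoked under the smallness hypothesis $|Du(0)|\le\delta$, which passes to the limit.

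A second, related gap is in the iteration. Rescaling $u-a_k\cdot x$ at scale $\rho^k$ turns $H(|Du|,x)$ into $H(|\rho^{k\alpha}Dv_k+a_k e_n|,\cdot)$, which no longer satisfies the structure condition $\lambda|p|^\gamma\le H\le\Lambda|p|^\gamma$ once the accumulated slope $a_k$ dominates $\rho^{k\alpha}$; your ``standard dyadic iteration'' with drifting affine approximants therefore leaves the class to which the compactness lemma applies. The paper circumvents this by never carrying a slope through the iteration: it splits according to whether $|Du(x_0)|\le\delta_0\rho^{\alpha}$ (Theorem \ref{smallgrad}, where the iteration of Proposition \ref{kstep} is performed only while the gradient at the fixed boundary point remains small relative to the scale, so no shift is needed) or $|Du(x_0)|>\delta_0\rho^{\alpha}$ (where a single rescaling by $\kappa=(|Du(0)|/\delta_0)^{1/\alpha}$ makes $|Du|$ bounded away from zero on a ball of fixed size, the equation becomes uniformly elliptic there, and the optimal uniformly elliptic boundary estimate of Theorem \ref{unifthm} is invoked). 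You would need both of these ingredients --- the cancellation lemma giving a uniformly elliptic limit with quadratic decay, and a separate treatment of the nondegenerate regime --- to close the argument. Your final paragraph on Hopf-type barriers is not needed: the uniform up-to-the-boundary convergence of the $u_j$ follows directly from the assumed global $C^{1,\bar\omega}$ estimates, which are a hypothesis of the theorem.
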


\begin{preremark}\label{norestr} Note it is not really a restriction that the operator $F$ does not depend on $Du$ and $u$ since, as  global $C^1$-regularity is already available, first and zero order terms can be incorporated in the $x$-dependence.\end{preremark}

\begin{preremark}\label{genop} Theorem \ref{mainthm} is a consequence of Theorem \ref{genthm} and \cite{BD1, BD2}, where it is proved that \eqref{m-eq}, even with  $F(D^2u)$ replaced by $F(D^2u) + b(x).Du$ or $F(D^2u) + b(x)|Du|^a$, $a\le 1$, admits global $C^{1,\alpha_0}$-estimates, for some small $\alpha_0>0$. Thus Theorem \ref{mainthm} is also valid for $F(D^2u)$ replaced by $F(D^2u) + h(x).Du$, incorporating  $|Du|^\gamma h(x).Du$ into $f(x)$. \end{preremark}

\begin{preremark}\label{remess} Theorem \ref{genthm} is stated in a way to emphasize an essential feature of the method: that ``minimal" $C^{1+}$-regularity implies the ``maximal" $C^{1,\alpha}$-regularity for degenerate equations. We expect \eqref{HH} suffices to ensure that \eqref{m-geneq} admits global $C^{1,\alpha_0}$-estimates, for some small $\alpha_0>0$ (as in \cite{BD1}, \cite{BD2}).\end{preremark}

Theorem \ref{genthm} for $\gamma=0$ gives optimal boundary and global $C^{1,\alpha}$-regularity for uniformly elliptic equations, stating that their solutions are as regular at $\partial\Omega$ as the boundary datum itself. This can be proved by Caffarelli's perturbation method \cite{C}, \cite{T}, more precisely, by seeing the problem as ``tangential" to
\cite[Lemma 4.1]{SS}. Results of similar vein, even in larger generality (for unbounded coefficients and $L^p$-viscosity solutions), have recently appeared in \cite{BGMW} for the so-called uniformly elliptic $S^*$-class where the optimal boundary exponent depends also on $\alpha_F$, as well as in the recent preprints \cite{LWZ}, \cite{NS}.
We thus assume Theorem~\ref{genthm} for $\gamma=0$ might be known to the experts; however, since this particular case is interesting in itself and deserves a quotable source, and since we use it in the proof of the full Theorem \ref{genthm} and strive to make this work self-contained, we give a complete proof in Section \ref{unifcase}.

\medskip

The main tools used in the proof of Theorem \ref{genthm} are the gradient oscillation estimates for degenerate elliptic equations established in \cite{ATU2D,ATUtow}, and the boundary regularity estimates for uniformly elliptic equations from \cite{SS}. Our analysis consists in controlling the gradient at a boundary point, imposing a precise sense on how that point is close to the critical set $\mathcal{C}(u)$, see Theorem \ref{smallgrad} in Section \ref{degcase}. If the gradient does not have that prescribed behaviour, in Section \ref{finalproof} we show that the boundary point is quantifiably far from $\mathcal{C}(u)$ so after a rescaling which depends on the value of $|Du|$ at that point, the equation \eqref{m-geneq} becomes uniformly elliptic in a domain of fixed size, and we are able to apply the optimal regularity estimates for uniformly elliptic equations given in Section \ref{unifcase}. An additional difficulty appears in the proof of the global estimate \eqref{thmest2}, since we need to compare the size of the gradient with a power of the distance to the boundary, at points where the latter is small.

%%%%%%%%%%%%%%%%%%%%%%%%%%%%%%%%%%%%%%%%%%%
\section{Boundary regularity for uniformly elliptic equations}\label{unifcase}
%%%%%%%%%%%%%%%%%%%%%%%%%%%%%%%%%%%%%%%%%%%

In this section we consider viscosity solutions of the uniformly elliptic equation \eqref{m-geneq} with $H=1$ or $\gamma=0$, that is,
\begin{equation}\label{unifeq}
\begin{array}{rcc}
F(D^2 u, x)= f & \mbox{in} & \Omega_1^+ \\[0.1cm]
u=g & \mbox{on} &\; \Omega_1',
\end{array}
\end{equation}
and show the gradient of $u$ at the boundary is H\"older continuous with the same exponent as the gradient of $g$.  This fact plays a role in the proof of Theorem \ref{genthm}. It is also interesting in itself, showing for instance that one cannot construct an example of low regularity solutions such as the ones in \cite{NV}, in which the regularity of the solution is lower in one direction than in the other $n-1$ directions.

We recall that if $F$ depends also on $Du$ and $u$ and $C^1$-estimates for the Dirichlet problem are available, this problem writes in the form \eqref{unifeq}, since $Du$ and $u$ can be incorporated in the $x$-dependence. This is true for instance for operators with up-to-quadratic growth in the gradient such as the ones considered in \cite{Sk}, \cite{N}. Below we will use the $C^1$ estimates from \cite{W} in order to make simplifications in the proof, since we do not aim at maximal generality here (see also Remark \ref{remnotgen} below). We will also simplify by flattening the boundary upfront, even though the result is  pointwise in nature since a general  boundary would flatten around each fixed point in the blow-up process (as in \eqref{rescdom} below).

The following theorem is the main result of this section. Here universal constants depend on $n,\lambda,\Lambda,\alpha', \omega$, and the maximal curvature of $\partial \Omega$.

%%%%%%%%%%%%%%%%%%%%%%%%
\begin{thm}\label{unifthm}
Suppose $\Omega$ is a $C^2$-domain. Let $u$ be a viscosity solution to the equation \eqref{unifeq} where $F$ satisfies \eqref{HH},  and $g \in C^{1,\alpha\,'}(\Omega'_1)$, for some $0<\alpha\,'<1$. Then for some universal constant $C$
\begin{equation}\label{unifthmest}
|u(x)-u(x_0)-Du(x_0)\cdot (x-x_0)| \leq C A \,|x-x_0|^{1+\alpha'}, \quad \mbox{ for } \; x\in \Omega^+_{1/2},\;x_0 \in \Omega'_{1/2},
\end{equation}
and for any $\tau < \alpha_F$ there exists a universal constant $\overline C$, such that
\begin{equation} \label{unifthmest2}
\|u\|_{C^{1,\overline\beta}(\Omega^+_{1/2})}\le \overline CA, \quad \mbox{where }\; \overline\beta= \min\{\alpha',\tau\},
\end{equation}
 and $A=\|u\|_{L^\infty(\Omega_1^+)}+\|g\|_{C^{1,\alpha\,'}(\Omega_1')} +\|f\|_{L^\infty(\Omega_1^+)}$.
\end{thm}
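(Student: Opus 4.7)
The plan is to follow Caffarelli's compactness-perturbation method, pointwise at a boundary point $x_0 \in \Omega'_{1/2}$. After flattening the boundary via a $C^2$ diffeomorphism (whose effect on $F$ is to alter its $x$-dependence within \eqref{HH}, while producing lower-order gradient terms that, thanks to the up-to-boundary $C^1$-estimates of \cite{W}, can be absorbed into a new $f$), I would translate to $x_0 = 0$, subtract the affine tangent of $g$ at $0$, and normalize so that $A \leq 1$, $g(0) = 0$, and $Dg(0) = 0$. The pointwise bound \eqref{unifthmest} will then follow by constructing inductively affine functions $L_k(y) = a_k y_n$ with $|a_{k+1} - a_k| \leq C_0 \rho^{k\alpha'}$ and
\[
\sup_{B_{\rho^k}^+} |u - L_k| \leq \rho^{k(1+\alpha')}, \qquad k \geq 0,
\]
for universal $\rho \in (0, 1/2)$ and $C_0$, starting from $L_0 \equiv 0$. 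Summing yields $a_k \to a_\infty = \partial_n u(0)$ and \eqref{unifthmest}.

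The inductive step is an improvement-of-flatness lemma: there exist universal constants $\rho, \eta, C_0 > 0$ such that whenever the normalized $u$ satisfies $\|u\|_\infty \leq 1$, $\|g\|_{C^{1,\alpha'}} \leq \eta$ with $g(0) = Dg(0) = 0$, and $\|f\|_\infty \leq \eta$, then there exists $|a| \leq C_0$ with $\|u - a y_n\|_{L^\infty(B_\rho^+)} \leq \rho^{1+\alpha'}$. The rescaling $u_k(y) := \rho^{-k(1+\alpha')}[u(\rho^k y) - L_k(\rho^k y)]$ satisfies the hypotheses of the lemma at each step for the rescaled operator
\[
F_k(M, y) \;:=\; \rho^{k(1-\alpha')}\, F\!\left(\rho^{-k(1-\alpha')} M,\ \rho^k y\right),
\]
which is uniformly elliptic with the same constants by the scale invariance of \eqref{H}, while $[g_k]_{C^{1,\alpha'}} \leq [g]_{C^{1,\alpha'}}$ and the source $\rho^{k(1-\alpha')} f(\rho^k\cdot)$ has vanishing supremum. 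I would prove the lemma itself by contradiction/compactness: any family of counterexamples is precompact in $C^1(\overline{B_{1/2}^+})$ by the boundary $C^{1,\gamma_0}$-estimates of \cite{W}, and converges to a limit $u_\infty$ solving $F_\infty(D^2 u_\infty) = 0$ in $B_1^+$ with $u_\infty \equiv 0$ on $\{y_n = 0\}$ for some constant-coefficient uniformly elliptic $F_\infty$. The contradiction reduces to the sharp boundary expansion $u_\infty(y) = (\partial_n u_\infty(0)) y_n + O(|y|^{1+\alpha''})$ near $0$ for some universal $\alpha'' > \alpha'$, which is precisely the content of \cite[Lemma 4.1]{SS} in the zero-data case; its proof hinges on an odd-reflection / $S^*$-class argument that yields a universal boundary exponent independent of the possibly small intrinsic exponent $\alpha_F$.

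For the global estimate \eqref{unifthmest2}, I would combine \eqref{unifthmest} with the interior $C^{1,\tau}_{\mathrm{loc}}$-estimate (for any $\tau < \alpha_F$, cf.\ \cite{CC}) via the standard distance-to-boundary dichotomy: for two interior points $y, z \in \Omega_{1/2}^+$, apply interior Hölder estimates on a ball of radius $\sim \min\{\dist(y,\partial\Omega), \dist(z,\partial\Omega)\}$ when $|y - z|$ is small compared with these distances, and otherwise apply the pointwise boundary Taylor expansion at the respective boundary projections; the two bounds match at exponent $\overline\beta = \min\{\alpha', \tau\}$. The main obstacle throughout is the sharpness of the limit-problem expansion in the compactness step: the intrinsic interior exponent $\alpha_F$ may be strictly smaller than $\alpha'$, so the argument depends decisively on the universal zero-boundary-data expansion of \cite[Lemma 4.1]{SS}, which is what lets us decouple boundary regularity from the unknown interior exponent $\alpha_F$.
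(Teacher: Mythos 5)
Your proposal is correct and follows essentially the same route as the paper: a compactness/approximation lemma whose limit problem is resolved by the universal zero-data boundary expansion of \cite[Lemma 4.1]{SS}, iterated at geometric scales with the rescaled operators $F_k$, and then matched with Caffarelli's interior estimate via the standard distance-to-boundary dichotomy. The only (harmless) difference is bookkeeping: the paper uses the a priori $C^1$ estimates of \cite{W} to subtract the full tangent plane of $u$ at the boundary point upfront, so that $u(0)=|Du(0)|=0$ and the iteration tracks only $\sup|u|$ with no affine corrections, whereas you carry the slopes $a_k$ explicitly in an improvement-of-flatness scheme and recover $\partial_n u(0)$ as their limit.
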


\begin{remark}\label{remnotgen} This theorem is stated only in the generality that we need for the subsequent developments for degenerate equations. Much more general versions of this result can be envisioned, featuring for instance VMO-like second-order coefficients instead of continuous, unbounded measurable first- and zero-order coefficients, $L^p$-viscosity solutions, power growth in the gradient, less regularity of the boundary, $C^{1, Dini}$-regularity type results. We refer to \cite{BGMW}, \cite{LWZ}, \cite{NS},  and the extensive list of references in these works for similar regularity results of more general nature.
\end{remark}

\begin{proof}[Proof of Theorem \ref{unifthm}] From the global results in \cite{W} we know that $Du$ is continuous and uniformly bounded up to the boundary. It is standard that for any point $x_0 \in \partial\Omega$ there exists a $C^2$ diffeomorphism $\varphi: \Omega\to B_1^+ $ with $\varphi(x_0)=0$, such that the function $v=u\circ \varphi^{-1}$ is a viscosity solution for $F_{\varphi}(D^2 v, x)=f(\varphi^{-1}(x)) \; \mbox{ in } \; B_1^+$,
with $F_\varphi$ satisfying \eqref{HH} with (possibly) different $\lambda, \Lambda, $ and $\omega$ depending only on those for $F$, the $C^1$-norm of $u$ and the $C^2$-norm of $\partial\Omega$. Therefore, we may only consider the special case $\Omega_1^+ = B_1^+$, $\Omega_1'=B_1'$, denoting $B_r^+=B_r \cap \{x_n>0\}$, $B_r'=B_r \cap \{x_n=0\}$.

In addition, translating the origin, we assume  $x_0=0$. Since $u(x)-u(0)-Du(0)\cdot x$ satisfies the same equation as $u(x)$, we can assume  $u(0)=|Du(0)|=0$.  So the first estimate in \eqref{unifthmest} follows from
\begin{equation}\label{unifest1}
\sup\limits_{x \in B^+_\rho}|u(x)| \leq CA\, \rho^{1+\alpha'}
\end{equation}
for $\rho\in(0,\rho_0)$ and any viscosity solutions $u$ of \eqref{unifeq} in $B_1^+$ satisfying $u(0)=|Du(0)|=0$, where $\rho_0$ is universal (by a trivial covering argument).

%%%%%%%%%%%%%%%%%%%%%%%%%%%%%%%%%%%
\begin{lemma}\label{complem}
There exists $\varrho >0$ depending only on $n,\lambda,\Lambda,\omega$ and $\alpha'$ such that for each $0<\rho \leq \varrho$ we can choose $\delta$ depending on $\rho$ such that if
\begin{equation}\label{small}
\|u\|_{L^\infty(B_1^+)} \leq 1, \quad \sup\limits_{0<s\leq 1}\omega(s) \leq \delta, \quad \|f\|_{L^\infty(B_1^+)}\leq \delta \quad \mbox{and} \quad \|g\|_{L^\infty(B_1')} \leq \delta,
\end{equation}
and $u$ is a viscosity solution of \eqref{unifeq} in $\Omega=B_1^+$ under conditions \eqref{HH}, such that $u(0)=|Du(0)|=0$, then
$$
\sup\limits_{B_\rho^+}|u(x)| \leq \rho^{1+\alpha'}.
$$
\end{lemma}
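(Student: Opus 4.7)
The plan is to prove this lemma by the standard compactness-and-contradiction perturbation method of Caffarelli, with the tangential (frozen) problem treated via \cite[Lemma~4.1]{SS}. Suppose the conclusion fails: there exists $\rho \in (0,\varrho]$ (with $\varrho$ to be fixed at the end) for which no admissible $\delta$ exists. Then we can find sequences $\delta_k\to 0$, operators $F_k$ with $x$-modulus $\omega_k$ satisfying $\sup_{s\le1}\omega_k(s)\le\delta_k$, right-hand sides $f_k$ with $\|f_k\|_\infty\le\delta_k$, boundary data $g_k$ with $\|g_k\|_\infty\le\delta_k$, and viscosity solutions $u_k$ of $F_k(D^2 u_k,x)=f_k$ in $B_1^+$ with $u_k=g_k$ on $B_1'$, satisfying $\|u_k\|_{L^\infty(B_1^+)}\le 1$ and $u_k(0)=|Du_k(0)|=0$, yet violating the target bound: $\sup_{B_\rho^+}|u_k|>\rho^{1+\alpha'}$.

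Next I extract a tangential limit. The global $C^{1,\bar\beta}$-estimates of \cite{W}, combined with boundary Hölder estimates, make $\{u_k\}$ precompact in $C^1(\overline{B_{1/2}^+})$. The $F_k(\cdot,x)$ are uniformly Lipschitz in $M$ by \eqref{H}, and their $x$-moduli $\omega_k$ vanish; a standard Arzelà–Ascoli/diagonal argument therefore gives a subsequence along which $F_k\to F_\infty$ locally uniformly, with $F_\infty=F_\infty(M)$ an $x$-independent operator satisfying \eqref{H}, and $u_k\to u_\infty$ in $C^1(\overline{B_{1/2}^+})$. Stability of viscosity solutions \cite{CIL} then identifies $u_\infty$ as a viscosity solution of $F_\infty(D^2u_\infty)=0$ in $B_1^+$, with $u_\infty\equiv 0$ on $B_1'$ (from $\|g_k\|_\infty\to 0$) and $u_\infty(0)=|Du_\infty(0)|=0$ (from $C^1$-convergence at $0$).

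I then apply the tangential boundary regularity. By \cite[Lemma~4.1]{SS}, the solution $u_\infty$ of the constant-coefficient, flat-boundary, zero-data problem lies in $C^{1,\bar\alpha}(\overline{B_{1/2}^+})$ with a universal constant and some exponent $\bar\alpha\in(\alpha',1)$ (the crucial gain). Coupled with $u_\infty(0)=0$ and $Du_\infty(0)=0$, this yields
\[
|u_\infty(x)|\le C_0\,|x|^{1+\bar\alpha},\qquad x\in\overline{B_{1/2}^+},
\]
with $C_0$ universal. Choose now $\varrho\in(0,1/2)$ so small that $C_0\varrho^{\bar\alpha-\alpha'}\le 1/4$; then for every $\rho\le\varrho$, $\sup_{B_\rho^+}|u_\infty|\le\tfrac14\rho^{1+\alpha'}$. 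Uniform convergence $u_k\to u_\infty$ forces $\sup_{B_\rho^+}|u_k|\le\tfrac12\rho^{1+\alpha'}<\rho^{1+\alpha'}$ for all $k$ large, contradicting the choice of the sequence and closing the argument.

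The \textbf{main obstacle} is securing the boundary gain $\bar\alpha>\alpha'$ for the tangential limit. This improvement cannot come from the interior Krylov–Safonov–Caffarelli estimate, since the interior exponent $\alpha_F$ can be arbitrarily small in view of \cite{NV}; it must come instead from the combined flatness of $B_1'$ and the vanishing of the Dirichlet datum for $u_\infty$, which is precisely the content of \cite[Lemma~4.1]{SS}. A secondary but still nontrivial point is maintaining the $C^1$-precompactness of the sequence $\{u_k\}$ up to $B_{1/2}'$ (so that $Du_\infty(0)=0$ in the limit), for which one invokes the global $C^{1,\bar\beta}$-estimates of \cite{W} together with uniform boundary Hölder bounds.
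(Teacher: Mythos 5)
Your proposal is correct and follows essentially the same route as the paper: a compactness/contradiction argument producing a tangential limit $u_\infty$ solving a constant-coefficient, flat, zero-datum problem, to which \cite[Lemma 4.1]{SS} is applied to gain an exponent strictly above $\alpha'$ (the paper uses the quadratic bound $\tilde C\rho^2$ from that lemma and sets $\varrho=(2\tilde C)^{-1/(1-\alpha')}$, which is your choice with $\bar\alpha=1$). The only differences are cosmetic (choice of constants and of the radius on which the limit equation is taken).
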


\begin{proof}
Note $\delta=0$ means we have an equation with constant coefficients and a solution which vanishes on a flat part of the boundary. This situation  was treated in \cite[Lemma 4.1]{SS} where it is shown that any viscosity solution $v$ of
\begin{equation}\label{limeq1}
F(D^2 v, 0)= 0\;  \mbox{ in } \; B_{3/4}^+ \qquad
v=0\; \mbox{ on }\;  B_{3/4}' \qquad
|Dv(0)|=0,
\end{equation}
is such that for some  $\tilde{C}>1$ depending on $n,\lambda,\Lambda$, and for all $\rho\in (0,1/2)$
\begin{equation}\label{target}
 \sup_{B_\rho^+}|v(x)| \leq \tilde{C}\,\rho^{2}.
\end{equation}

We set $\varrho=\left(2\tilde{C} \right)^{-\frac{1}{1-\alpha'}}$. If Lemma \ref{complem} fails for some $\rho\le \varrho$, we can find  sequences $u_k$, $F_k$, $\omega_k$ and $f_k$ such that $u_k$ satisfies $\|u_k\|_{L^\infty(B_1^+)} \leq 1$, $u_k(0)=|Du_k(0)|=0$ and, in the viscosity sense,
$$
F_k(D^2 u_k, x)= f_k  \quad \mbox{in} \quad B_1^+,
$$
where
$$
 \sup\limits_{0<s\leq 1}\omega_k(s) +\|f_k\|_{L^\infty(B_1^+)}+ \|u_k\|_{L^\infty(B_1')} \leq \frac 1k,
$$
but
\begin{equation}\label{contradiction}
\sup\limits_{B_\rho^+}|u_k(x)| > \rho^{\,1+\alpha'}.
\end{equation}
By the $C^{1+}$ global regularity estimates of \cite{W}, up to a subsequence we have that $u_k$ and $Du_k$  converge uniformly in $\overline{B^+_{3/4}}$ to some function $u_\infty$, resp. $Du_\infty$. By the stability properties of viscosity solutions $u_\infty$ solves \eqref{limeq1}, for some uniformly elliptic operator $F_\infty(D^2u)$ which is a limit of a subsequence of $F_k$.
Hence \eqref{target} holds for $v=u_\infty$, so
$$
\sup\limits_{B_\rho^+}|u_\infty(x)| \leq \tilde{C} \rho^2 \leq \frac{1}{2}\rho^{1+\alpha'}.
$$
Hence, we can find $\overline{k}$ depending on $\rho$ such that for $k \geq \overline{k}$,
\begin{equation}
|u_k(x)|  \leq  |u_\infty(x)|+|u_k(x)-u_\infty(x)| \leq \dfrac{1}{2}\rho^{1+\alpha'}+\dfrac{1}{2}\rho^{1+\alpha'},
\end{equation}
 a contradiction with \eqref{contradiction}.
\end{proof}

\medskip

%%%%%%%%%%%%%%%%%%%%%%%%%
\begin{prop}\label{kstepUnif}
 There exist universal parameters $0<\varrho<1$ and $\varpi>0$ such that if
\begin{equation}\label{smallmod}
\|u\|_{L^\infty(B_1^+)}\leq 1,\quad \sup\limits_{0<s\leq 1}\omega(s)\leq \varpi, \quad \|f\|_{L^\infty(B_1^+)} \leq \varpi, \quad \mbox{and} \quad \|g\|_{C^{1,\alpha\,'}(B_1')} \leq \varpi,
\end{equation}
and $u$ is a viscosity solution of \eqref{unifeq} in $\Omega=B_1^+$ under conditions \eqref{HH}, such that $u(0)=|Du(0)|=0$, then
for each $0< r \leq {\varrho}$,
\begin{equation}\label{finalest}
\sup\limits_{B_{r}}|u(x)|    \leq  C r^{1+\alpha\,'}, \quad \mbox{ with }\; C=  \varrho^{-(1+\alpha\,')}.
\end{equation}
\end{prop}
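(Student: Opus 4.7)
The plan is to prove by induction on $k\ge 0$ that
$$
\sup_{B_{\rho^k}^+}|u|\le \rho^{k(1+\alpha')},\qquad \rho:=\varrho,
$$
where $\varrho$ is the radius provided by Lemma \ref{complem}. Once this geometric decay is established, \eqref{finalest} follows by a routine dyadic interpolation: for $r\in(0,\varrho]$, taking the integer $k\ge 0$ with $\rho^{k+1}<r\le \rho^k$ yields
$$
\sup_{B_r^+}|u|\le \rho^{k(1+\alpha')}=\rho^{-(1+\alpha')}\rho^{(k+1)(1+\alpha')}\le \varrho^{-(1+\alpha')}r^{1+\alpha'}.
$$

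The induction is driven by the rescalings $u_k(x):=\rho^{-k(1+\alpha')}u(\rho^k x)$ on $B_1^+$. A direct computation shows that $u_k$ solves $F_k(D^2 u_k,x)=f_k(x)$ in $B_1^+$, where $F_k(M,x):=\rho^{k(1-\alpha')}F(\rho^{k(\alpha'-1)}M,\rho^k x)$ and $f_k(x):=\rho^{k(1-\alpha')}f(\rho^k x)$. Since $1-\alpha'>0$ and $\rho<1$, the operator $F_k$ satisfies \eqref{HH} with the same ellipticity constants and a modulus $\omega_k(s)=\omega(\rho^k s)\le \omega(s)\le \varpi$, and $\|f_k\|_{L^\infty(B_1^+)}\le \|f\|_{L^\infty(B_1^+)}\le \varpi$. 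Also $u_k(0)=0$ and $|Du_k(0)|=\rho^{-k\alpha'}|Du(0)|=0$. The boundary trace $g_k:=u_k|_{B_1'}$ is $g_k(x)=\rho^{-k(1+\alpha')}g(\rho^k x)$, and here the key point enters: since $u=g$ on the flat piece $B_1'$ and $|Du(0)|=0$, the tangential derivative $D'g(0)$ vanishes, so together with $g(0)=0$ we have $|g(y)|\le \|g\|_{C^{1,\alpha'}(B_1')}|y|^{1+\alpha'}$ for $y\in B_1'$, whence $\|g_k\|_{L^\infty(B_1')}\le \|g\|_{C^{1,\alpha'}(B_1')}\le \varpi$.

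Choosing $\rho=\varrho$ as above and $\varpi:=\delta(\varrho)$, the $\delta$ from Lemma \ref{complem}, all the smallness requirements in \eqref{small} hold for $u_k$ as soon as $\|u_k\|_{L^\infty(B_1^+)}\le 1$. The base case $k=0$ is the hypothesis $\|u\|_{L^\infty(B_1^+)}\le 1$. For the inductive step, Lemma \ref{complem} applied to $u_k$ gives $\sup_{B_\rho^+}|u_k|\le \rho^{1+\alpha'}$; since $u_{k+1}(x)=\rho^{-(1+\alpha')}u_k(\rho x)$, this is exactly $\|u_{k+1}\|_{L^\infty(B_1^+)}\le 1$, closing the induction. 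The only real subtlety is controlling $g_k$ uniformly in $k$, which is why both $g(0)=0$ and $D'g(0)=0$ are essential---they produce the power $|y|^{1+\alpha'}$ needed to absorb the blow-up factor $\rho^{-k(1+\alpha')}$, whereas $f_k$ and $\omega_k$ decay with $k$ automatically due to the positive exponents $1-\alpha'$ and $\rho^k\le 1$.
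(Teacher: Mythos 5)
Your proposal is correct and follows essentially the same route as the paper: the same induction on dyadic scales $\varrho^k$ via the rescalings $u_k(x)=\varrho^{-k(1+\alpha')}u(\varrho^k x)$, the same verification that $F_k$, $f_k$, $\omega_k$, $g_k$ satisfy the smallness hypotheses of Lemma \ref{complem} (including the key use of $g(0)=|Dg(0)|=0$ to absorb the blow-up factor in the boundary datum), and the same dyadic interpolation to pass from the discrete scales to all $r\le\varrho$.
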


\begin{proof} First, for
 $\varrho$ and $\varpi=\delta(\varrho)$ as in Lemma \ref{complem}, we shall prove inductively that for each integer $k\ge0$
\begin{equation}\label{kcontrol}
\sup\limits_{B_{\varrho^{k}}^+}|u(x)| \leq \varrho^{k(1+\alpha\,')}.
\end{equation}
 When $k=0$ estimate \eqref{kcontrol} follows directly by
$\|u\|_{L^\infty(B_1^+)}\leq 1$.
We assume \eqref{kcontrol} holds for some value $k=j$, and define
$$
\tilde u(x):=  \frac{u(\varrho^{\,j} x)}{\varrho^{\,j(1+\alpha\,')}}.
$$
Note that $\tilde u$ is a viscosity solution for $\tilde F(D^2 \tilde u,x)=\tilde f$ in $B_1^+$, where
$$
\tilde F(M,x):= \varrho^{j(1-\alpha')}F \left( \varrho^{-j(1-\alpha')} M, \varrho^j x \right) \quad \mbox{and} \quad \tilde f(x):=\varrho^{j(1-\alpha')}f(\varrho^j x).
$$
We observe that the operator $\tilde F$ has the same ellipticity constants as  $F$, and $\|\tilde f\|_{L^\infty(B_1^+)} \leq \| f\|_{L^\infty(B_1^+)}\leq \varpi$. Denoting by $\tilde\omega$ the modulus of continuity related to $\tilde F$, we have
$$
\sup\limits_{0<s\leq 1}\tilde \omega(s) \leq \sup\limits_{0<s\leq 1}\omega(\varrho^{\,j}s) \leq \varpi.
$$
Also $\tilde u(0)=|D\tilde u(0)|=0$, and, denoting by $\tilde g$ the restriction of $\tilde u$ to $B_1'$, we have
$$
\|\tilde g\|_{L^\infty (B_1')} \leq \|g\|_{C^{1,\alpha\,'}(B_1')} \leq \varpi,
$$
since $g(0)=|Dg(0)|=0$. Finally, \eqref{kcontrol} holds for $k=j$, so
$$
\sup_{B_1^+}|\tilde u| \leq 1.
$$
From the last three estimates and  Lemma \ref{complem} applied to $\tilde F(D^2 \tilde u,x)=\tilde f$ in $B_1^+$  we infer
$$
\sup\limits_{B_\varrho^+}|\tilde u(x)| \leq \varrho^{1+\alpha\,'},
$$
which means \eqref{kcontrol} is true for  $k=j+1$, and proves \eqref{kcontrol} for any integer $k>0$.

 Let now $0< r \leq {\varrho}$, and $k$ be the integer such that ${\varrho}^{\,k+1}<r\leq {\varrho}^{\,k}$. From \eqref{kcontrol}
$$
\sup\limits_{B_{r}}|u(x)|  \leq  \sup\limits_{B_{\varrho^k}}|u(x)|  \leq  C r^{1+\alpha\,'}, \quad \mbox{ for }\; C=  \varrho^{-(1+\alpha\,')},
$$
and estimate \eqref{finalest} is obtained.
\end{proof}

\smallskip

%%%%%%%%%%%%%%%%%%%%%%%%
Now we end the proof of Theorem \ref{unifthm}.
 We rescale $u$ by setting
$$
\overline{u}(x):= \kappa u(\tau x),
$$
for
$$
 \kappa= \frac{1}{\|u\|_{L^{\infty}(B_1^+)}+\varpi^{-1}\|g\|_{C^{1,\alpha\,'}(B_1')}+ \varpi^{-1}\| f\|_{L^\infty(B_1^+)}},
$$
and $0<\tau<1$ sufficiently small that $$\sup\limits_{0<s\leq \tau}\omega(s)\le\varpi.$$
Note that $\overline F(D^2\overline u,x)=\overline f(x)$ in $B_{1}^+$, for
$$
\overline F(M,x):=\kappa \tau^2\cdot F([\kappa\tau^2]^{-1} \cdot M, \tau x) \quad \mbox{and} \quad \overline f(x):=\kappa \tau^2 f(\tau x)
$$
We easily check that we can  apply Proposition \ref{kstepUnif} to $\overline F$ and $\overline u$. As a direct consequence, we obtain \eqref{finalest} for $\overline u$, hence \eqref{unifest1}  for $u$. Estimate \eqref{unifthmest} is proved.

It is well known that to obtain \eqref{unifthmest2} it is sufficient to prove that
\begin{equation}\label{taylor1}
|u(y)-u(x)-Du(x)\cdot (y-x)| \leq C A \,|y-x|^{1+\overline\beta}, \qquad x,y\in {\Omega_{1/2}^+}
\end{equation}
(see for instance the proofs in the appendices of \cite{BFM}, \cite{BGMW}).

We recall that by the celebrated \cite[Theorem 2]{C}, if $F(D^2u(y),y) = f(y)$ in $B_1(x)$, then for any $\tau<\alpha_F$ we have the interior estimate
\begin{equation}\label{taylor2}
|u(y)-u(x)-Du(x)\cdot (y-x)| \leq \overline{C} A_0 \,|y-x|^{1+\tau}, \qquad y\in {B_{1/2}(x)}.
\end{equation}
where $A_0=\|u\|_{L^\infty(B_1(x))} +\|f\|_{L^\infty(B_1(x))}$.

How to combine the boundary estimate \eqref{unifthmest} and the interior estimate \eqref{taylor2} into the global estimate \eqref{taylor1} is also well-known -- see Remark 10.2 and the proof of Proposition 2.4 in the appendix of \cite{MS}.
\end{proof}

%%%%%%%%%%%%%%%%%%%%%%%%%%%%%%%%%%%%%%%%%%%
\section{Boundary gradient estimates for the degenerate case} \label{degcase}
%%%%%%%%%%%%%%%%%%%%%%%%%%%%%%%%%%%%%%%%%%%

From now on we work under the hypotheses of Theorem \ref{genthm}.
We recall $u$ is a viscosity solution for \eqref{m-geneq} with $u|_{\Omega_1'}=g \in C^{1,\alpha\,'}(\Omega_1')$. By virtue of the assumed $C^{1,\bar\omega}$ global regularity estimates,  the derivatives of $u$ are continuous and uniformly bounded in $\Omega_1^+$. More precisely, for $x,y \in {\Omega_1^+}$
\begin{equation}\label{C1omega}
\left\{\begin{array}{rcl}
|u(x)-u(y)-Du(y)\cdot (x-y)| &\leq& CA|x-y|\bar\omega(|x-y|) \\
|Du(x)-Du(y)| &\leq& CA\,\bar\omega(|x-y|), \quad |Du(x)| \leq CA,
\end{array}
\right.
\end{equation}
for $A$ as in Theorem \ref{mainthm}.

\begin{remark}
We recall that for \eqref{m-eq}, and more generally for operators as in Remark \ref{genop}, it is proved in \cite[Theorem 1.1]{BD2} that \eqref{C1omega} holds with $\bar\omega(s)=s^{\beta_0}$ for some small universal $\beta_0>0$. We remark it is stated in that theorem that $\beta_0$ may depend also on the norm of the first order coefficient $b(x)$, however this dependence can be eliminated as follows: examining the proof of \cite[Theorem 1.1]{BD1} one sees that it produces the same $\beta_0$ for all $b$ with $\|b\|_\infty\le1$. Then rescaling $\tilde u(x) = u(x/\|b\|_\infty)$ leads to an equation for $\tilde u$ in which the coefficient $\tilde b$ has norm smaller than $1$; so only $C$ depends on $\|b\|_\infty$.
\end{remark}

 In order to state the results in this section, we denote with $[Dg]_{C^{\,0,\alpha\,'}(\Omega_1')}$ the $\alpha\,'$-H\"older seminorm of $Dg$ restricted to $\Omega'_1$. For simplicity, we always suppose $x_0=0 \in \partial\Omega$ and $u(0)=0$ (replacing $u$ by $u-u(0)$ in the equation). Constants will be called universal if they depend on $n,\gamma, \lambda, \Lambda$, $\alpha'$, $\omega$, $\bar \omega$, as well as the maximal curvature of $\partial\Omega$.

\medskip

Here is the main result of this section.

%%%%%%%%%%%
\begin{thm}\label{smallgrad} Under the hypotheses of Theorem \ref{genthm}, let $u$ satisfy \eqref{m-geneq}, $u(0)=0$.
There exist small positive universal numbers $\rho_0$ and $\delta_0$, such that for $\alpha$ as in \eqref{boundexp},  if
\begin{equation}\label{boundcond1}
|Du(0)| \leq \delta_0 \cdot \rho^{\alpha},
\end{equation}
holds for some $0<\rho \leq \rho_0$ then
\begin{equation}\label{boundres1}
\sup_{\Omega_{\rho}^+} |u(x)|  \leq CA\,\rho^{1+\alpha}.
\end{equation}
\end{thm}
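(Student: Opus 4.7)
The plan is to prove \eqref{boundres1} by a dyadic iteration that cascades from scale $1$ down to scale $\rho$ in powers of a small universal $\lambda_0\in(0,1)$. After normalizing $A\le 1$, the goal is to establish by induction on $k\ge 0$ that
\[
\sup_{\Omega^+_{\lambda_0^k}}|u|\;\le\; C_0\,\lambda_0^{k(1+\alpha)},\qquad \text{whenever } \lambda_0^k\ge\rho,
\]
for a universal constant $C_0$; choosing the largest such $k$ then yields \eqref{boundres1}. The base case $k=0$ is the hypothesized $L^\infty$-bound.

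The iteration is driven by the rescaling $u_k(x):=\lambda_0^{-k(1+\alpha)}u(\lambda_0^k x)$, which solves an equation of the same structural type as \eqref{m-geneq}. The two defining constraints $\alpha\le\alpha'$ and $\alpha(1+\gamma)\le 1$ built into \eqref{boundexp} are precisely what makes this rescaling data-preserving: the new operator $\tilde F_k$ still satisfies \eqref{HH} with the same ellipticity constants and modulus $\omega$, the source $\tilde f_k$ stays bounded by $\|f\|_\infty$, and the boundary datum $\tilde g_k$ stays bounded in $C^{1,\alpha'}$ with the same seminorm. Crucially, the hypothesis $|Du(0)|\le\delta_0\rho^\alpha$ is critical for this scaling: whenever $\rho\le\lambda_0^k$, one has $|Du_k(0)|=\lambda_0^{-k\alpha}|Du(0)|\le\delta_0$, and hence also $|D\tilde g_k(0)|\le\delta_0$, at every level of the iteration. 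The inductive step thus reduces to a single flat-boundary decay: for suitably small universal $\lambda_0,\delta_0,\varpi$, any solution $v$ of the structural equation on $\Omega^+_1$ with $\|v\|_\infty\le C_0$, $|Dv(0)|\le\delta_0$, $\|\tilde f\|_\infty\le\varpi$, and boundary datum satisfying $\tilde g(0)=0$, $|D\tilde g(0)|\le\delta_0$, and bounded $C^{1,\alpha'}$-seminorm obeys $\sup_{\Omega^+_{\lambda_0}}|v|\le C_0\lambda_0^{1+\alpha}$.

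I would establish this one-step decay by a compactness and contradiction argument. Negating the conclusion produces a sequence $v_j$ with $|Dv_j(0)|\to 0$ and vanishing source, whose boundary data $\tilde g_j$ converge in $C^1$ to a limit $g_\infty$ with $g_\infty(0)=|Dg_\infty(0)|=0$ and $[g_\infty]_{C^{1,\alpha'}}$ universally bounded. The global $C^{1,\bar\omega}$ regularity \eqref{C1omega} together with the boundary gradient oscillation estimates of \cite{ATU2D,ATUtow} (applied after flattening the boundary) provides a uniform modulus for $Dv_j$ up to $\overline{\Omega_1'}$, producing a $C^1$-convergent subsequence $v_j\to v_\infty$ with $v_\infty(0)=|Dv_\infty(0)|=0$. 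Stability of viscosity solutions, combined with the observation from \cite{IS} that $|Du|^\gamma F(D^2u)=0$ and $F(D^2u)=0$ have the same viscosity solutions, identifies $v_\infty$ as a solution of a uniformly elliptic limit equation $F_\infty(D^2v_\infty,0)=0$ on $B_1^+$ with boundary datum $g_\infty$ on $B_1'$. Theorem \ref{unifthm}, together with the pointwise estimate $|g_\infty(x)|\le C|x|^{1+\alpha'}$ coming from $g_\infty(0)=|Dg_\infty(0)|=0$, yields $\sup_{B^+_{\lambda_0}}|v_\infty|\le\tilde C\lambda_0^{1+\alpha'}$. Choosing $\lambda_0$ small so that $\tilde C\lambda_0^{\alpha'-\alpha}\le C_0/4$ (or $C_0\ge 4\tilde C$ when $\alpha=\alpha'$) contradicts the failure of the bound for $j$ large.

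The main technical obstacle I anticipate is the $C^1$-compactness at the flat boundary: transferring $|Dv_j(0)|\to 0$ into the decisive equality $|Dv_\infty(0)|=0$ requires uniform convergence of the gradients up to $\overline{\Omega_1'}$, not merely of the functions. This is exactly where the boundary gradient oscillation estimates of \cite{ATU2D,ATUtow} are indispensable, supplying a modulus for $Dv_j$ that is insensitive to the degeneracy of \eqref{m-geneq} near critical points. A secondary bookkeeping point is that $\tilde g_k$ need not be small in $L^\infty$ after rescaling, only bounded in $C^{1,\alpha'}$ with vanishing $0$-jet at the origin, which is nevertheless enough both for the compactness of the sequence of boundary data and for Theorem \ref{unifthm} to produce decay at the sharp rate $1+\alpha'$ for the limit.
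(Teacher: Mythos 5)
Your overall architecture (normalize, prove a geometric decay $\sup_{\Omega^+_{\lambda_0^k}}|u|\le C_0\lambda_0^{k(1+\alpha)}$ by induction using the rescaling $u_k(x)=\lambda_0^{-k(1+\alpha)}u(\lambda_0^k x)$, drive the one-step improvement by compactness, and observe that $|Du(0)|\le\delta_0\rho^\alpha$ keeps $|Du_k(0)|\le\delta_0$ at every admissible scale) is the same as the paper's (Lemma \ref{complemmagrad} plus Proposition \ref{kstep}). But there is a genuine gap in your one-step lemma, and it occurs precisely in the critical case $\alpha=\alpha'$ (i.e.\ $\alpha'\le\frac1{1+\gamma}$), which is the main case of interest. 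You only require the boundary seminorm $[D\tilde g]_{C^{0,\alpha'}}$ to be \emph{bounded}, not small, so your compactness limit $v_\infty$ solves the uniformly elliptic problem with a genuinely nontrivial $C^{1,\alpha'}$ boundary datum $g_\infty$, and the best decay available for it is Theorem \ref{unifthm}: $\sup_{B^+_{\lambda_0}}|v_\infty|\le \tilde C_1\bigl(\|v_\infty\|_\infty+\|g_\infty\|_{C^{1,\alpha'}}\bigr)\lambda_0^{1+\alpha'}\le \tilde C_1(C_0+S)\lambda_0^{1+\alpha'}$. To close the induction at rate $C_0\lambda_0^{1+\alpha}$ you need $\tilde C_1(C_0+S)\lambda_0^{\alpha'-\alpha}\le C_0/2$. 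When $\alpha<\alpha'$ the factor $\lambda_0^{\alpha'-\alpha}$ saves you; when $\alpha=\alpha'$ you need $\tilde C_1(C_0+S)\le C_0/2$, which is impossible since $\tilde C_1$ is a fixed universal constant (there is no reason it is $<1/2$) and it multiplies $C_0$ itself. Your proposed remedy ``take $C_0\ge 4\tilde C$'' tacitly treats $\tilde C$ as independent of $C_0$, but the constant in Theorem \ref{unifthm} scales linearly with the $L^\infty$-norm of the solution, which in your inductive hypothesis is $C_0$. Note also that when $\alpha=\alpha'$ the rescaling does not shrink the seminorm: $[Du_k]_{C^{0,\alpha'}}=\lambda_0^{k(\alpha'-\alpha)}[Du]_{C^{0,\alpha'}}=[Du]_{C^{0,\alpha'}}$, so there is no hidden geometric decay to exploit either.

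The fix is the one the paper uses: put $[Dg]_{C^{0,\alpha'}(\Omega_1')}\le\delta$ among the smallness hypotheses of the compactness lemma (see \eqref{estcond1}). Then the contradiction sequence has vanishing boundary seminorm, the limit has \emph{identically zero} boundary datum, and one invokes not Theorem \ref{unifthm} but \cite[Lemma 4.1]{SS}, which gives the \emph{quadratic} decay $\sup_{\Omega^+_\rho}|u_\infty|\le C_0\rho^2$. The gap between the exponents $2$ and $1+\alpha'$ is exactly what allows the choice $\rho_0=(2C_0)^{-1/(1-\alpha')}$ and makes the iteration close with constant $1$ at every step, uniformly in whether $\alpha<\alpha'$ or $\alpha=\alpha'$. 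The smallness of the seminorm (and of $f$ and $\omega$) is then arranged once and for all by the final normalization $\tilde u=\kappa u(\tau x)$ with $\kappa$ as in \eqref{normsol}--\eqref{normcoef}; your blanket normalization ``$A\le1$'' does not achieve this. Two minor points: the $C^1$-compactness up to the boundary is already supplied by the assumed global $C^{1,\bar\omega}$-estimates \eqref{C1omega} (Arzel\`a--Ascoli), so the appeal to \cite{ATU2D,ATUtow} there is not needed; and the passage from the degenerate limit equation to $F_\infty(D^2v_\infty)=0$ should be justified by Lemma \ref{genmult} (the $x$-dependent extension of \cite[Lemma 6]{IS}), as you essentially indicate.
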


First, we need the following approximation lemma, whose proof is similar to that of Lemma \ref{complem}, with smallness assumptions on the gradient of the solution.

%%%%%%%%%%%%%
\begin{lemma}\label{complemmagrad}
Under the hypotheses of Theorem \ref{genthm}, let $u$ satisfy \eqref{m-geneq}, $u(0)=0$, and $\|u\|_{L^\infty(\Omega)} \leq 1$. There exists a universal positive number $\rho_0$ such that for every $0<\rho \leq \rho_0$ we can find $\delta>0$ for which
\begin{equation}\label{estcond1}
\sup\limits_{0<s\leq 1}\omega(s)+ \|f\|_{L^\infty(\Omega^+_1)}+[Dg]_{C^{\,0,\alpha\,'}(\Omega_1')}+|Du(0)| \leq \delta
\end{equation}
implies
\begin{equation}\label{osceq1}
\sup_{\Omega_{\rho}^+} |u(x)|  \leq \rho^{1+\alpha}.
\end{equation}
\end{lemma}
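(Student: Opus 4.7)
The proof follows the compactness--contradiction scheme of Lemma \ref{complem}, adapted to the degenerate setting. Fix $\rho_0$ to be chosen later and $\rho\in(0,\rho_0]$, and suppose the claim fails for this $\rho$. Then there exist sequences $F_k$, $H_k$, $\omega_k$, $f_k$, $g_k$ (satisfying \eqref{HH} with the same $\lambda,\Lambda,\gamma$) and viscosity solutions $u_k$ of $H_k(|Du_k|,x)F_k(D^2u_k,x)=f_k$ in $\Omega$, with $u_k(0)=0$, $\|u_k\|_{L^\infty(\Omega_1^+)}\le 1$, such that
$$
\sup_{0<s\le 1}\omega_k(s)+\|f_k\|_{L^\infty(\Omega_1^+)}+[Dg_k]_{C^{0,\alpha\,'}(\Omega_1')}+|Du_k(0)|\le \tfrac1k,
$$
but $\sup_{\Omega_\rho^+}|u_k|>\rho^{1+\alpha}$. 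The assumed $C^{1,\bar\omega}$ global estimate \eqref{C1omega} makes $\{u_k\}$ and $\{Du_k\}$ equicontinuous on $\overline{\Omega_{3/4}^+}$, so along a subsequence $u_k\to u_\infty$ and $Du_k\to Du_\infty$ uniformly there, $F_k(\cdot,0)\to F_\infty$ locally uniformly to an operator satisfying \eqref{H}, and $\omega_k,f_k\to 0$ uniformly.

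The limiting equation for $u_\infty$ is identified as follows. The $x$-dependence of $F_k$ is erased in the limit, the right-hand side vanishes, and the Imbert--Silvestre equivalence $H(|Du|,x)F(D^2u,x)=0 \iff F(D^2u,x)=0$ from \cite{IS} removes the degenerate prefactor; standard stability of viscosity solutions then shows $u_\infty$ is a viscosity solution of the uniformly elliptic equation $F_\infty(D^2u_\infty)=0$ in $\Omega_{3/4}^+$. The boundary data pass cleanly to the limit: $|Du_k(0)|\to 0$ forces $Du_\infty(0)=0$, while $[Dg_k]_{C^{0,\alpha\,'}}\to 0$ makes $Dg_\infty$ constant on $\Omega_{3/4}'$, with the constant being the tangential part of $Du_\infty(0)=0$; together with $g_\infty(0)=0$ this yields $g_\infty\equiv 0$ on $\Omega_{3/4}'$. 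Applying Theorem \ref{unifthm} to $u_\infty$ with vanishing boundary data then gives $|u_\infty(x)|\le \tilde C|x|^{1+\alpha\,'}$ on $\Omega_{1/2}^+$, with $\tilde C$ universal. Since $\alpha\le\alpha\,'$, I fix $\rho_0$ so small that $\tilde C\rho_0^{\alpha\,'-\alpha}\le \tfrac12$; this gives $|u_\infty|\le \tfrac12\rho^{1+\alpha}$ on $\Omega_\rho^+$ for every $\rho\le\rho_0$, and the uniform convergence $u_k\to u_\infty$ finally delivers $|u_k|\le \rho^{1+\alpha}$ on $\Omega_\rho^+$ for $k$ large, contradicting the supposed failure.

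The main obstacle is ensuring the limit equation is genuinely uniformly elliptic despite the degenerate prefactor $H$; the key input is the \cite{IS} equivalence, which lets $H$ disappear in the limit even when $Du_\infty$ vanishes on large sets. A secondary delicate point is the order of quantifiers: $\rho_0$ must be chosen before $\delta$ and uniformly over all contradicting sequences, which here succeeds because the H\"older decay constant $\tilde C$ furnished by Theorem \ref{unifthm} depends only on universal data ($n,\lambda,\Lambda,\alpha\,'$, and the maximal curvature of $\partial\Omega$). Note also that the hypothesis $[Dg_k]_{C^{0,\alpha\,'}}\to 0$ is what guarantees $g_\infty\equiv 0$ rather than an affine boundary trace with a nontrivial tangential slope, which would destroy the desired power decay at the origin.
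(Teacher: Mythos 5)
Your overall scheme --- compactness and contradiction, passage to a limit equation, removal of the degenerate prefactor via the Imbert--Silvestre lemma, identification of zero boundary data from $[Dg_k]_{C^{0,\alpha'}}\to 0$ and $|Du_k(0)|\to 0$, and a decay estimate for the limit profile --- coincides with the paper's proof. The gap is in the final quantitative step. You bound the limit by $|u_\infty(x)|\le \tilde C|x|^{1+\alpha'}$ using Theorem \ref{unifthm} and then ask for $\rho_0$ with $\tilde C\rho_0^{\alpha'-\alpha}\le\frac12$. But $\alpha=\min\{\alpha',\frac{1}{1+\gamma}\}$, so whenever $\alpha'\le\frac{1}{1+\gamma}$ one has $\alpha=\alpha'$, the factor $\rho_0^{\alpha'-\alpha}$ is identically $1$, and your requirement becomes $\tilde C\le\frac12$, which fails in general since $\tilde C$ is a (possibly large) universal constant. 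In that case no choice of $\rho_0$ closes the contradiction, and this is precisely the case in which the lemma is most needed.

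The paper avoids this by applying \cite[Lemma 4.1]{SS} to the limit: $u_\infty$ solves a constant-coefficient uniformly elliptic equation, vanishes on the boundary, and has $Du_\infty(0)=0$, hence it decays \emph{quadratically}, $\sup_{\Omega_\rho^+}|u_\infty|\le C_0\rho^2$. Since $2>1+\alpha'$ strictly, the choice $\rho_0=(2C_0)^{-1/(1-\alpha')}$ is always possible. The essential point is that the limiting profile is strictly more regular at the origin than the class $C^{1,\alpha}$ one is trying to capture --- it is $C^{1,1}$ there --- and it is exactly this gain of exponent that lets $\rho_0$ be fixed universally before $\delta$. Your argument can be repaired either by quoting \cite[Lemma 4.1]{SS} directly, or by applying the boundary estimate to $u_\infty$ with some fixed H\"older exponent strictly larger than $\alpha'$ (legitimate because $g_\infty\equiv 0$), but not with the exponent $\alpha'$ itself.
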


\begin{proof}
 We assume for contradiction that  for some $\rho>0$ there exist sequences of continuous functions $H_k(p,x)$, $F_k(M,x)$ satisfying $\eqref{HH}$, $f_k \in C(\overline{\Omega})$, $u_k \in C^{1,\bar\omega}(\overline{\Omega})$ satisfying \eqref{C1omega}, with $u_k(0)=0$, $\|u_k\|_{L^{\infty}(\Omega)} \leq 1$, such that $u_k$ solves
\begin{equation}\label{geneqk}
H_k(|Du_k|,x) F_k(D^2u_k,x)=f_k
 \quad \mbox{in} \quad \Omega_1^+,
\end{equation}
and for each integer $k>0$ we have
\begin{equation}\label{estcondk}
\sup\limits_{0<s\leq 1}\omega_k(s)+\|f_k\|_{L^\infty(\Omega^+_1)}+[Du_k]_{C^{\,0,\alpha'}(\Omega_1')}+|Du_k(0)|  \leq \frac 1k,
\end{equation}
but
\begin{equation}\label{estcontr}
\sup_{\Omega_{\rho}^+} |u_k(x)|  > {\rho}^{\,1+\alpha'}.
\end{equation}

Subsequences of $H_k$ and $F_k$ converge locally uniformly to some functions $H_\infty(p,x)$ and $F_\infty(M)$ (the latter is independent of $x$ since $\omega_k\to0$ by \eqref{estcondk}), which satisfy \eqref{HH}.
Since $u_k$ is globally $C^1$-equicontinous in $\overline{\Omega}$, Arzela-Ascoli theorem provides a subsequence of $u_k$ which converges in $C^1$ to a function $u_\infty$. By using \eqref{geneqk}, \eqref{estcondk} together with $u_k(0)=0$, we obtain by stability properties of viscosity solutions that $u_\infty$ solves in the viscosity sense the following problem
$$
\begin{array}{rcc}
H(|Du_\infty|,x) F_\infty(D^2 u_\infty)=0 \, & \mbox{in} & \Omega_{3/4}^+ \\[0.16cm]
u_\infty=0 \, & \mbox{on} & \Omega_{3/4}' \\[0.16cm]
|Du_\infty(0)|=0. & &
\end{array}
$$
 Hence by \cite[Lemma 6]{IS} or Lemma \ref{genmult} below, we observe that in fact $u_\infty$ solves
$$
F_\infty(D^2 u_\infty)=0\,  \mbox{ in }  \Omega_{3/4}^+, \qquad
u_\infty=0\,  \mbox{ on }  \Omega_{3/4}', \qquad
|Du_\infty(0)|=0.
$$
Consequently, \cite[Lemma 4.1]{SS} provides that for some  $C_0=C_0(n,\lambda,\Lambda)$ and all $0<\rho\le\rho_0$
$$
\sup\limits_{\Omega^+_{ \rho}}|u_\infty(x)| \leq C_0 \,\rho^2 \leq \frac{1}{2}\rho^{\,1+\alpha'},
$$
where the latter inequality determines our choice of $\rho_0$, namely  $\rho_0 = (2C_0)^{\,-\frac{1}{1-\alpha'}}$. Therefore, we conclude that for $k$ large (depending on $\rho$)
$$
\sup\limits_{\Omega^+_{\rho}}|u_k(x)|  \leq \sup\limits_{\Omega^+_{\rho}}|u_k(x)-u_\infty(x)|+\sup\limits_{\Omega^+_{\rho}}|u_\infty(x)| \leq  \dfrac 12 \rho^{\,1+\alpha'}+\dfrac 12 \rho^{\,1+\alpha'} ,
$$
 a contradiction.
\end{proof}

For further reference we record the following trivial extension of \cite[Lemma 6]{IS}.
\begin{lemma}\label{genmult} Assume $H(p,x)$ and $F(M,x)$ are continuous and satisfy \eqref{HH}. If $u$ is a viscosity solution of $H(Du+q,x)F(D^2u,x)=0$ for some $q\in\mathbb{R}^n$, then $u$ is a viscosity solution of $F(D^2u,x)=0$.
\end{lemma}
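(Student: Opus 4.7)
\medskip

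The plan is to follow the template of \cite[Lemma 6]{IS} and simply carry the constant shift $q$ through the perturbation argument. Let $\varphi$ be a $C^2$ test function touching $u$ from above at an interior point $x_0$; we must show $F(D^2\varphi(x_0),x_0)\le 0$ (the subsolution case; the supersolution case is symmetric). The viscosity subsolution property for $u$ with respect to the equation $H(Du+q,x)F(D^2u,x)=0$ directly yields
\[
H(D\varphi(x_0)+q,x_0)\,F(D^2\varphi(x_0),x_0)\le 0.
\]
If $D\varphi(x_0)+q\ne 0$ the bound $H(p,x)\ge \lambda|p|^{\gamma}>0$ in \eqref{HH} lets us divide and conclude immediately. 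The only delicate situation is $D\varphi(x_0)+q=0$, since then the equation is fully degenerate at $x_0$ and yields the vacuous $0\le 0$.

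To handle that case I would perturb $\varphi$ to move the gradient away from $-q$ while preserving a contact with $u$. First, by replacing $\varphi$ with $\varphi(x)+|x-x_0|^{4}$ one may assume the contact is strict: $(u-\varphi)(x)<(u-\varphi)(x_0)=0$ for all $x$ in a small ball $B_r(x_0)\setminus\{x_0\}$, and neither $D\varphi(x_0)$ nor $D^2\varphi(x_0)$ is changed. Fix a unit vector $e\in\mathbb{R}^n$ and for small $\eps>0$ define
\[
\varphi_\eps(x):=\varphi(x)+\eps\,\langle e,x-x_0\rangle.
\]
Because the contact is strict, the upper semicontinuous map $u-\varphi_\eps$ attains its maximum on $\overline{B_r(x_0)}$ at an interior point $x_\eps$, and the strict touching forces $x_\eps\to x_0$ as $\eps\to 0^+$. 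After subtracting a constant, $\varphi_\eps$ touches $u$ from above at $x_\eps$, so by the subsolution property
\[
H\bigl(D\varphi(x_\eps)+\eps e+q,\,x_\eps\bigr)\,F\bigl(D^2\varphi(x_\eps),x_\eps\bigr)\le 0.
\]

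Now $D\varphi(x_\eps)+q\to 0$ by continuity, so $D\varphi(x_\eps)+\eps e+q\to \eps e\ne 0$ as $\eps$ is kept fixed and $r$ shrunk; more concretely, for each small fixed $\eps>0$ one can choose $r=r(\eps)$ small enough that $|D\varphi(x_\eps)+q|<\eps/2$, hence $|D\varphi(x_\eps)+\eps e+q|\ge \eps/2$, and therefore $H(D\varphi(x_\eps)+\eps e+q,x_\eps)\ge \lambda(\eps/2)^{\gamma}>0$. Dividing out, we obtain $F(D^2\varphi(x_\eps),x_\eps)\le 0$; letting $\eps\to 0^+$ and using continuity of $F$ and of $D^2\varphi$ yields $F(D^2\varphi(x_0),x_0)\le 0$, as required. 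The supersolution half is identical, replacing $e$ by $-e$ and the direction of touching. The only nontrivial point—really a bookkeeping point—is that the shift $q$ is a constant vector and survives every step of the argument unchanged, which is exactly why the authors call this a trivial extension of \cite[Lemma 6]{IS}.
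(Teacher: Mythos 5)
Your overall strategy --- perturb the test function by a small affine function so as to push its gradient off the degenerate value $-q$ --- is indeed the idea behind \cite[Lemma 6]{IS}, which is exactly what the paper's one-line proof invokes (adding only that the ball must be shrunk so that the $x$-dependence of $F$ cannot change the sign of $F(D^2\varphi(\cdot),\cdot)$). However, your execution has a genuine gap at the decisive step, namely the claim that $|D\varphi(x_\eps)+q|<\eps/2$. All you know is that $D\varphi(x_0)+q=0$ and $|D\varphi(x_\eps)+q|\le \|D^2\varphi\|_{\infty}\,|x_\eps-x_0|$, so you would need $|x_\eps-x_0|\lesssim\eps$. But with the quartic penalization the only available rate comes from combining $(u-\varphi)(x_\eps)\le-|x_\eps-x_0|^4$ with $(u-\varphi)(x_\eps)\ge\eps e\cdot(x_\eps-x_0)\ge-\eps|x_\eps-x_0|$, which yields only $|x_\eps-x_0|\le\eps^{1/3}$. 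Hence $D\varphi(x_\eps)+q$ may be of size $\eps^{1/3}\gg\eps$, and nothing prevents it from equalling exactly $-\eps e$; in that case the perturbed contact gradient $D\varphi(x_\eps)+\eps e+q$ vanishes for every small $\eps$ along your fixed direction $e$ and the viscosity inequality gives only the vacuous $0\le 0$. The quantifier manipulation ``choose $r=r(\eps)$ small'' does not rescue this: for the maximum of $u-\varphi_\eps$ to be interior one needs $\eps r$ smaller than the boundary gap, which is only of order $r^4$, i.e.\ $\eps<r^3$ --- the opposite of shrinking $r$ at fixed $\eps$ --- and in any case making $r$ small does not make $|x_\eps-x_0|$ small \emph{relative to} $\eps$.

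This is precisely the difficulty that makes \cite[Lemma 6]{IS} nontrivial, and the correct argument is structured differently: one argues by contradiction, assuming (say) $F(D^2\varphi(x_0),x_0)>0$, shrinks the ball so that $F(D^2\varphi(x),x)>0$ throughout it (this is where the modification stated in the paper, controlling the $x$-dependence of $F$ through $\omega$, is needed), and perturbs by \emph{all} vectors $p$ in a small ball rather than by multiples of a single fixed $e$. The viscosity inequality then forces $D\varphi(x_p)+p+q=0$ at \emph{every} perturbed contact point, and the contradiction is extracted from this whole family of identities via the structure of the resulting contact set, as in \cite{IS}. Your single-direction perturbation reduces the problem to exactly this degenerate scenario without resolving it, so the heart of the argument is missing.
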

\begin{proof} We repeat the proof of \cite[Lemma 6]{IS}. With the notations of that proof, we only need to reduce the ball $B_r$ to be small enough that $F(A,0)<0$ implies $F(A,x)<0$ for $x\in B_r$.
\end{proof}
%%%%%%%%%%%%%%%%%%%%%%%%%%%
\begin{prop}\label{kstep}
Under the hypotheses of Theorem \ref{genthm}, let $u$ satisfy \eqref{m-geneq}, $u(0)=0$, and $\|u\|_{L^\infty(\Omega)} \leq 1$. Let $\alpha$ be as in \eqref{boundexp}. There exist small positive universal numbers $\rho_0$ and $\delta_0$ such that if
\begin{equation}\label{estcond2}
\sup\limits_{0<s\leq 1}\omega(s)+ \|f\|_{L^\infty(\Omega^+_1)}+[Dg]_{C^{\,0,\alpha\,'}(\Omega_1')} \leq \delta_0,
\end{equation}
and for some $k\in\mathbb{N}$
\begin{equation}\label{boundcond2}
|Du(0)| \leq \delta_0 \cdot \rho_0^{\, k\alpha},
\end{equation}
 then
\begin{equation}\label{discrete}
\sup_{\Omega_{\rho_0^k}^+} |u(x)|  \leq \rho_0^{\,k(1+\alpha)}.
\end{equation}
\end{prop}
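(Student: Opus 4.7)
The plan is to iterate Lemma \ref{complemmagrad} via finite induction on $k$, in direct analogy with the proof of Proposition \ref{kstepUnif}. Let $\rho_0$ be the universal radius from Lemma \ref{complemmagrad} and let $\delta_0$ be the corresponding $\delta$ provided there (for the once-and-for-all choice of internal radius $\rho=\rho_0$). The base case $k=0$ is immediate from $\|u\|_{L^\infty(\Omega_1^+)}\le 1$. For the inductive step, note first that \eqref{boundcond2} at level $j+1$ implies the same inequality at level $j$ (since $\rho_0<1$), so the inductive hypothesis \eqref{discrete} at $k=j$ is available.

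The natural rescaling is
\[
\tilde u(x) := \rho_0^{-j(1+\alpha)}\,u(\rho_0^{j} x), \qquad \tilde\Omega := \rho_0^{-j}\Omega,
\]
together with the rescaled nonlinearities $\tilde H(p,x) := \rho_0^{-j\alpha\gamma}H(\rho_0^{j\alpha}p,\rho_0^{j}x)$ and $\tilde F(M,x):=\rho_0^{j(1-\alpha)}F(\rho_0^{-j(1-\alpha)}M,\rho_0^{j}x)$. A direct computation shows that $\tilde H$ and $\tilde F$ again satisfy \eqref{HH} with the same ellipticity constants, that the modulus of continuity of $\tilde F$ is $\tilde\omega(s)=\omega(\rho_0^{j} s)\le \omega(s)$, and that $\tilde u$ is a viscosity solution of
\[
\tilde H(|D\tilde u|,x)\,\tilde F(D^2\tilde u,x) \;=\; \rho_0^{\,j(1-\alpha(1+\gamma))}\,f(\rho_0^{j} x) \quad \text{in } \tilde\Omega_1^+.
\]
Since $\alpha\le \tfrac{1}{1+\gamma}$ the prefactor is $\le 1$, so the right-hand side has $L^\infty$-norm $\le \|f\|_{L^\infty(\Omega_1^+)}\le \delta_0$. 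The inductive hypothesis gives $\|\tilde u\|_{L^\infty(\tilde\Omega_1^+)}\le1$; one has $\tilde u(0)=0$; and $|D\tilde u(0)|=\rho_0^{-j\alpha}|Du(0)|\le \delta_0\rho_0^{\alpha}\le \delta_0$ by \eqref{boundcond2} at level $j+1$. For the rescaled boundary datum $\tilde g$ one computes $[D\tilde g]_{C^{0,\alpha'}(\tilde\Omega_1')} = \rho_0^{j(\alpha'-\alpha)}[Dg]_{C^{0,\alpha'}(\Omega_1')}\le \delta_0$, using $\alpha\le \alpha'$. Finally, the maximal curvature of $\partial\tilde\Omega$ is $\rho_0^{j}$ times that of $\partial\Omega$, hence no larger, so the universal constants of Lemma \ref{complemmagrad} apply to $\tilde u$ unchanged. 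Shrinking $\delta_0$ once and for all so that all the quantities above lie below the threshold $\delta=\delta(\rho_0)$ of Lemma \ref{complemmagrad}, that lemma yields $\sup_{\tilde\Omega_{\rho_0}^+}|\tilde u|\le \rho_0^{1+\alpha}$, and unscaling via $|u(y)|=\rho_0^{j(1+\alpha)}|\tilde u(\rho_0^{-j}y)|$ gives \eqref{discrete} at level $k=j+1$.

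The main subtlety is the bookkeeping of the rescaled smallness conditions, and this is what pins down the specific form $\alpha=\min\{\alpha',\,1/(1+\gamma)\}$ in \eqref{boundexp}. The inequality $\alpha\le 1/(1+\gamma)$ is precisely what is needed to keep the rescaled right-hand side bounded through the iteration (the prefactor $\rho_0^{j(1-\alpha(1+\gamma))}$ in fact shrinks when the inequality is strict), while $\alpha\le \alpha'$ is exactly what is needed for the rescaled boundary seminorm $[D\tilde g]_{C^{0,\alpha'}}$ to remain controlled; violating either would destroy the ``small data'' regime required by Lemma \ref{complemmagrad} and the iteration would break down after a finite number of steps.
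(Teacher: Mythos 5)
Your proposal is correct and follows essentially the same route as the paper: the same rescaling $u_j(x)=\rho_0^{-j(1+\alpha)}u(\rho_0^{j}x)$ with the same rescaled $H$, $F$ and right-hand side, the same verification that $\alpha\le 1/(1+\gamma)$ controls $\|f_j\|_{L^\infty}$ and $\alpha\le\alpha'$ controls $[Du_j]_{C^{0,\alpha'}}$, and the same application of Lemma \ref{complemmagrad} at each step. The only cosmetic differences are that the paper starts the induction at $k=1$ (directly from the lemma) and fixes $\delta_0=\delta(\rho_0)/2$ up front rather than ``shrinking once and for all'' at the end.
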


\begin{proof} We choose $\rho_0<2^{-1/\alpha'}$ and $\delta_0=\delta(\rho_0)/2$, the numbers given by Lemma \ref{complemmagrad}.

Assuming  \eqref{estcond2}, we shall prove inductively that, for each positive integer $k$, estimate \eqref{boundcond2} implies that estimate \eqref{discrete} holds. The case $k=1$ follows directly from Lemma \ref{complemmagrad}.

Let us suppose that  \eqref{boundcond2} implies \eqref{discrete} for $k=j$. We claim the same is true for $k=j+1$. Indeed, supposing that \eqref{boundcond2} holds for $k=j+1$, we argue as follows: denote $r_j:=\rho_0^{\,j}$ and
$$
u_j(x):=\frac{u(r_j\, x)}{r_j^{1+\alpha}} \quad \mbox{in} \quad \Omega_{1,j}^+,
$$
where, assuming without loss that in a neighborhood of $0$ (which can be supposed to be of size 1, after a rescaling depending only on the curvature of $\Omega$) the domain $\Omega$ can be represented as
$$
\partial \Omega= \left\{ x=(x',x_n)\in \mathbb{R}^n\::\: x_n = a(x')\right\}, \quad  \Omega= \left\{ x=(x',x_n)\in \mathbb{R}^n\::\: x_n > a(x')\right\},
$$
for some $C^2$ smooth function $a$ defined in a neighborhood of $0\in\mathbb{R}^{d-1}$, we have set
\begin{equation}\label{rescdom}
\Omega_{1,r_j}^+=\Omega_{1,j}^+ =\left\{ x\in \mathbb{R}^n\::\: |x|<1,\:x_n > a_j(x')\right\},  \quad\mbox{ with }\quad  a_j(x') = \frac{a(r_j x')}{r_j}.
\end{equation}
We observe that $|Da_j|$ and $|D^2a_j|$ are bounded only in terms of $|Da|$ and $|D^2a|$. Note that $\partial\Omega_{1,j}^+\cap \{x_n=a_j(x')\}$ actually ``flattens" as $j$ increases.

We see  that $u_j$ solves
\begin{equation}\label{equal1}
H_j(|Du_j|, x) F_j(D^2u_j,x)=f_j(x)\quad\mbox{ in }\;\Omega_{1,j}^+,
\end{equation}
 where
$$
H_j(p,x) =r_j^{-\alpha\gamma}H(r_j^\alpha p,r_jx),\quad F_j(M,x):= r_j^{1-\alpha}F(r_j^{\alpha-1}M, r_jx), \quad f_j(x):=r_j^{(1-\alpha)-\alpha\gamma}f(r_j\, x).
$$
Also, we observe that $H_j$, $F_j$ satisfy \eqref{HH} with the same constants, and the modulus $\omega_j(s) = \omega(r_js)$, so $$\sup\limits_{0<s\leq 1}\omega_j(s)\le \sup\limits_{0<s\leq 1}\omega(s).$$
Taking into account that \eqref{discrete} holds for $k=j$, we  obtain
$$
\|u_j\|_{L^{\infty}(\Omega_{1,j}^+)} \leq 1.
$$
Since $\alpha \leq \min\{\alpha\,',(1+\gamma)^{-1}\}$ we easily check that \begin{equation}\label{contr}
\|f_j\|_{L^{\infty}(\Omega_{1,j}^+)} \leq \|f\|_{L^{\infty}(\Omega_{1}^+)}   \quad \mbox{ and }\quad [Du_j]_{C^{0,\alpha'}(\Omega_{1,j}')} \leq [Du]_{C^{0,\alpha'}(\Omega_1')}=[Dg]_{C^{0,\alpha'}(\Omega_{1}')}.
\end{equation}
Additionally, since \eqref{boundcond2} holds for $k=j+1$, we have
$$
|Du_j(0)| \leq \delta_0 \cdot \rho_0^{\alpha}.
$$
Hence $u_j$ satisfies the assumptions of Lemma \ref{complemmagrad}, in particular  \eqref{estcond1} for $\delta=\delta_0$, and from that lemma applied to \eqref{equal1} we obtain
$$
\sup_{\Omega_{\rho_0,j}^+} |u_j(x)|  \leq \rho_0^{1+\alpha}.
$$
This means $u$ satisfies \eqref{discrete} for $k=j+1$.
\end{proof}

%%%%%%%%%%%%%%%%%%%%%%%%%%%%%%%%
Now, we prove the main result of this section.

\begin{proof}[Proof of Theorem \ref{smallgrad}] Let $\rho_0$, $\delta_0$ be chosen as in the previous proposition.

First,  assume $\|u\|_{L^\infty(\Omega)}\leq 1$ and the smallness conditions \eqref{estcond2}, which permit us to use Proposition \ref{kstep}. Given $0<\rho \leq \rho_0$, let $k$ be the positive integer such that $\rho_0^{k+1}< \rho \leq \rho_0^{k}$. Then, if
$$
|Du(0)| \leq \delta_0 \rho^{\alpha},
$$
which implies $|Du(0)|\leq \delta_0\,\rho_0^{k\alpha}$, we have by Proposition \ref{kstep}
\begin{equation}\label{normest}
\sup_{B_{\rho}^+} |u(x)|  \leq \rho_0^{k(1+\alpha)} \leq C\rho^{1+\alpha},\qquad C=\rho_0^{-(1+\alpha)},
\end{equation}
so Theorem \ref{smallgrad} is proved under the additional smallness assumption.

In the general case, for $\delta_0$ as in \eqref{estcond2}, we choose two constants $\tau, \kappa\in (0,1)$ as follows: $\tau$ is so small that
$$
\sup\limits_{0<s\leq \tau}\omega(s)\le\delta_0/2,$$
and
$$ \kappa = \frac{\delta_0/2}{\|u\|_{L^{\infty}(\Omega_1^+)}+\tau^{1+\alpha'}[Dg]_{C^{0,\alpha\,'}(\Omega_1')} + \tau^{\frac{2+\gamma}{1+\gamma}}\|f\|_{L^{\infty}(\Omega_1^+)}^{\frac{1}{1+\gamma}} }.
$$
We define the  function
\begin{equation}\label{normsol}
\tilde{u}(x):= \kappa \cdot u(\tau x)
\end{equation}
which solves
\begin{equation}\label{normeq}
\tilde{H}(D\tilde{u},x)\tilde F(D^2 \tilde u,x)=\tilde f\quad\mbox{ in }\; \Omega^+_{1,\tau},
\end{equation}
 for
\begin{equation}\label{normcoef}
\begin{cases}
\tilde{H}(p,x) = (\kappa\tau)^{\gamma}H(p/(\kappa\tau), \tau x), \qquad\tilde{F}(M,x):=\kappa \tau^2\cdot F([\kappa\tau^2]^{-1} \cdot M, \tau x), \\
 \tilde{f}(s):=\kappa^{1+\gamma}\tau^{2+\gamma} f (\tau x),\qquad \qquad \Omega^+_{1,\tau}  =\left\{ x\in \mathbb{R}^n\::\: |x|<1,\:x_n > \frac{a(\tau x)}{\tau}\right\},
\end{cases}
\end{equation}
We easily check that
$$
\tilde u(0)=0, \quad \|\tilde{u}\|_{L^\infty(\Omega^+_{1,\tau})} \leq 1, \quad |D\tilde u(0)|\le |Du(0)|\le \delta_0 \rho^{\alpha}, \quad \mbox{and}
$$
$$
\sup\limits_{0<s\leq 1}\tilde{\omega}(s)+\|\tilde{f}\|_{L^\infty(\Omega^+_{1,\tau})}+
[D\tilde{u}]_{C^{\,0,\alpha\,'}(\Omega^\prime_{1,\tau})} \leq \delta_0.
$$
By applying estimate \eqref{normest} to \eqref{normeq} and $\tilde{u}$ we obtain \eqref{boundres1}.
\end{proof}

%%%%%%%%%%%%%%%%%%%%%%%%%%%%%%%%%%%%%%%%%%%
\section{Proof of the main result} \label{finalproof}
%%%%%%%%%%%%%%%%%%%%%%%%%%%%%%%%%%%%%%%%%%%

\begin{proof}[Proof of Theorem \ref{mainthm}]

By replacing $u$ by $u/A$ (i.e. setting $\kappa=1/A$, $\tau=1$ in \eqref{normsol}-\eqref{normcoef})  we may assume $A=1$ in what follows.

\smallskip

\noindent

\textit{Proof of the boundary estimate \eqref{thmest}}. We consider the universal parameters $\delta_0$ and $\rho_0$ given previously in Theorem \ref{smallgrad}. By translating the origin and replacing $u$ by $u-u(x_0)$ we again assume $x_0=0$ and $u(0)=0$. Our analysis is going to be around the following parameter:
\begin{equation}\label{around}
\kappa:=\left(\frac{|Du(0)|}{\delta_0}\right)^{\frac{1}{\alpha}},
\end{equation}
for $\alpha$ given in \eqref{boundexp}.
By using the normalization
$$
\tilde u = \frac{\delta_0\rho_0^\alpha}{|Du(0)|}u\qquad (\mbox{if }\; |Du(0)|\not=0),
$$
we easily see that we can suppose $$\kappa \leq \rho_0.$$

We observe that  \eqref{thmest} is obvious if $|x|=|x-x_0|\ge \rho_0$, since $\rho_0$ is universal and \eqref{C1omega} holds. We now split our analysis into two cases.

\medskip

\noindent
$\bullet$ Case 1. Assume $\rho>0$ is such that $\kappa\le \rho\le\rho_0$. Then we have the following control
$$
|Du(0)| \leq \delta_0\cdot \rho^{\alpha}.
$$
This allows us to apply Theorem \ref{smallgrad}, obtaining
$$
\sup\limits_{\Omega_\rho^+} |u(x)| \leq C\,\rho^{1+\alpha}.
$$
By the last two estimates we conclude that  \eqref{thmest} follows, for each $x$ with $|x|=|x-x_0|=\rho$.

\medskip

\noindent
$\bullet$ Case 2. Assume $0 < \rho < \kappa \leq \rho_0$. We define the rescaled function
\begin{equation}\label{deftheta}
\vartheta(x):= \frac{u(\kappa x)}{\kappa^{1+\alpha}} \quad \mbox{in} \quad  \Omega_{1,\kappa}^+.
\end{equation}
where $\Omega_{1,\kappa}^+$ is defined as in \eqref{rescdom} or \eqref{normcoef}. As in the previous section we see  that $\vartheta$ solves a modified equation in $\Omega_{1,\kappa}^+$ (namely, \eqref{equal1} with $r_j$ substituted by $\kappa$), with the same universal dependence on the parameters, a bounded right-hand side and a bounded $\alpha'$-seminorm of $\vartheta$ on the boundary (as in \eqref{contr}).

On the other hand, from \eqref{around} we have
$$
|Du(0)| = \delta_0 \,\kappa^\alpha.
$$
so, by applying Theorem \ref{smallgrad} again (precisely for the radius $\kappa$), we get
$$
\|\vartheta\|_{L^\infty(\Omega_{1,\kappa}^+)} = \sup\limits_{x \in \Omega_{\kappa}^+}\frac{|u(x)|}{\kappa^{1+\alpha}} \leq C.
$$
By applying \eqref{C1omega} to the equation satisfied by $\vartheta$, we get
\begin{equation}\label{thet}
|D\vartheta(x)-D\vartheta(0)|\le C \,\bar\omega(|x|)\quad\mbox{if }\;x\in \Omega_{1/2,\kappa}^+.
\end{equation}
 But since
$$
|D\vartheta(0)|=\delta_0,
$$
the estimate \eqref{thet} provides a universal small radius $\mu>0$ such that
$$
\delta_0/2 \leq |D\vartheta(x)| \leq 2\delta_0 \quad \mbox{for each} \quad x\in\Omega_{\mu,\kappa}^+.
$$
Thus,  $\vartheta$ satisfies the uniformly elliptic equation with bounded right-hand side
$$
\tilde F(D^2\vartheta,x) = (\tilde H(D\vartheta,x))^{-1} \tilde f(x)
$$
in the region $\Omega_{\mu,\kappa}^+$. So we can apply Theorem \ref{unifthm} (properly rescaled) and obtain
$$
\sup\limits_{\Omega_{\tau,\kappa}^+}|\vartheta(x)-D\vartheta(0)\cdot x| \leq C\,\tau^{1+\alpha\,'},
$$
for any $0<\tau\leq \mu/2$. Hence
\begin{equation}\label{loc1}
\sup\limits_{\Omega_{\rho}^+}|u(x)-Du(0)\cdot x| \leq C\,\rho^{1+\alpha}
\end{equation}
for $0<\rho \leq \kappa\mu/2$, since $\alpha \leq \alpha'$.

We now extend \eqref{loc1} to radii $\rho \in (\mu\kappa/2,\kappa)$. If $\rho$ is one such radius, by using Case 1 with $\rho=\kappa$ we get
$$
\begin{array}{rcl}
\sup\limits_{\Omega_{\rho}^+}|u(x)-Du(0)\cdot x| & \leq & \sup\limits_{\Omega_{\kappa}^+}|u(x)-Du(0)\cdot x| \\[0.5cm]
 & \leq & C\,\kappa^{1+\alpha} \\
 & \leq & C\, \left( {2}/{\mu} \right)^{1+\alpha} \rho^{1+\alpha},
\end{array}
$$
which concludes the proof of  \eqref{thmest}.

\medskip

\noindent
\textit{Proof of the global estimate \eqref{thmest2}}. Again, as a simple adaptation of the proofs in the appendices of \cite{BFM}-\cite{BGMW} shows, to obtain \eqref{thmest2} it is sufficient to prove that
$$
|u(y)-u(x)-Du(x)\cdot (y-x)| \leq C  \,|y-x|^{1+\beta}, \qquad x,y\in {\Omega_{1/2}^+}.
$$
Let $d_1<1$ be a number such that the distance function to the boundary of $\Omega$ is smooth in the set dist$(x,\partial\Omega) <d_1$. Since  we already proved the boundary $C^{1,\alpha}$ estimates, and by \cite{ART1} have the interior $C^{1,\sigma}$ regularity, it is enough to suppose that one of $x,y$, say $x$, is such that $0<\mathrm{dist}(x,\partial\Omega) <d_1/2$.
 Fix one such $x$ and denote $\bar x $ the point at $\partial \Omega$ for which $d_0=|x-\bar x| = \mathrm{dist}(x,\partial \Omega)$. We can assume again that $\bar x=0$ and $u(0)=0$, by translating the origin and removing a constant from $u$.

 We denote with $L_{x}^u (y)$ the supporting hyperplane of the function $u$ at the point $x$, that is $L_{x}^u (y) = u(x) + Du(x) \cdot (y-x)$, and set
 $$v(y):= u(y) - L_{\bar{x}}^u(y) = u(y)  - Du(0) \cdot y.$$

From the boundary estimate which we just proved we know that
\begin{align} \label{e1}
|v(y)|&\leq C  |y|^{1+\alpha}, \quad \mbox{ for }\; y\in \Omega_{3/4}^+.
\end{align}

We will again use the  parameter $\kappa$ from \eqref{around}
and divide the proof into cases, this time according to how $\kappa$ and $d_0$ compare.

Exactly as in Case 2 above, by using the rescaling \eqref{deftheta} we obtain a universal $\mu>0$ such that by the global estimate in Theorem \ref{unifthm} and $\beta\le\bar\beta$
\begin{align}
|u(y)-u(x)-Du(x)\cdot (y-x)| &\leq C  \,|y-x|^{1+\beta}\label{glob1}\\
|Du(y)-Du(x)|&\le  C  \,|y-x|^{\beta},\label{glob2}
\end{align}
provided $|x|,|y|\le 3\mu\kappa/4$. In particular, if $|x|\le 3\mu\kappa/4$ we have
$$
|Dv(x)|=|Du(x)-Du(0)|\le C\,d_0^\beta.
$$
So if $d_0=|x|\le \mu\kappa/2$ and $y\in B_{d_0/2}(x)$, the desired inequality \eqref{thmest2} is given by \eqref{glob1}, while if $|x|\le \mu\kappa/2$ and $y\not \in B_{d_0/2}(x)$ we can write
\begin{align}
|u(y)-u(x)-Du(x)\cdot (y-x)|&=|u(y) - (L_{\bar{x}}^u(y) + L_{x}^v(y) )|  \leq |u(y) - L_{\bar{x}}^u(y)| + |L_x^v(y)| \nonumber\\
&\leq C\,|y|^{1+\alpha} + |v(x)| + |Dv(x)|\,|y-x|\label{loci1}
\end{align}
and since
\begin{equation}\label{loci2}
\begin{cases}
|y| \leq |y-x| + |x| = |y-x| + d_0 \leq 3|y-x| \\
|v(x)| \leq C \,|x|^{1+\alpha} = C \, d_0^{1+\alpha} \leq 2^{1+\alpha}C \, |y-x|^{1+\alpha} \\
|Dv(x)|\,|y-x| \leq C \, d_0^{\beta}\, |y-x| \leq 2^{1+\beta} C \,|y-x|^{1+\beta}
\end{cases}
\end{equation}
we obtain \eqref{thmest2} again.

Thus from now on we can assume that $d_0\ge \mu\kappa/2$, that is,
\begin{equation}\label{smallgr}
|Du(0)|\le Cd_0^\alpha.
\end{equation}
Set   $$\tilde v (\tilde y) =v(x+d_0\tilde y)=v(y),\quad\mbox{ and } \quad \tilde w(\tilde y) = \tilde v(\tilde y) + d_0|Du(0)|\tilde y.$$ Since $u(y) = v(y) + Du(0)\cdot y$, it is easy to check that $\tilde w$ is a solution of an equation
\begin{align} \label{e2}
\tilde{H} (D\tilde w(\tilde y),\tilde y)\tilde{F}(D^2\tilde w(\tilde y),\tilde  y) = d_0^{2+\gamma}\tilde f(\tilde y), \quad \tilde f(\tilde y) ={f}(x+d_0\tilde y),
\end{align}
in the unit ball $B_1(0)$, where $
\tilde{H}, \tilde{F}$ satisfy \eqref{HH}.
By applying the interior regularity result from \cite{ART1} we obtain that
\begin{equation}\label{loc3}
\tilde w(\tilde y) - \tilde w (0) - D\tilde w(\tilde  y)\cdot \tilde  y \le C\left(\|\tilde w\|_{L^\infty(B_1)} + d_0^{\frac{2+\gamma}{1+\gamma}}\|\tilde f\|_{L^\infty(B_1)}^{\frac{1}{1+\gamma}}\right)|\tilde y|^{1+\beta},\quad\mbox{ for }\; \tilde y\in B_{1/2},
\end{equation}
and
\begin{equation}\label{loc4}
D\tilde w(0)\le C\left(\|\tilde w\|_{L^\infty(B_1)} + d_0^{\frac{2+\gamma}{1+\gamma}}\|\tilde f\|_{L^\infty(B_1)}^{\frac{1}{1+\gamma}}\right).
\end{equation}
By \eqref{e1} and \eqref{smallgr} we have
$$
\|\tilde w\|_{L^\infty(B_1)}\le C d_0^{1+\alpha}
$$
so \eqref{loc3} implies
\begin{align*}
|u(y)-u(x)-Du(x)\cdot (y-x)|& = |v(y)-v(x)-Dv(x)\cdot (y-x)|\\
& = |\tilde w(\tilde y) - \tilde w (0) - D\tilde w(\tilde  y)|\\
&\le C\left(d_0^{1+\alpha} + d_0^{1+\frac{1}{1+\gamma}}\|\tilde f\|_{L^\infty(B_1)}^{\frac{1}{1+\gamma}}\right)
\frac{|y-x|^{1+\beta}}{d_0^{1+\beta}}\\
&\le C|y-x|^{1+\beta},
\end{align*}
provided $y\in B_{d_0/2}(x)$. On the other hand \eqref{smallgr} and \eqref{loc4} imply
$$
|Dv(x)|= \frac{1}{d_0}|D\tilde v(0)| =  \frac{1}{d_0}|D\tilde w(0)-d_0Du(0)|\le C\,d_0^\beta,
$$ so if $y\not\in B_{d_0/2}(x)$ we can repeat \eqref{loci1}-\eqref{loci2}, and conclude the proof.

\end{proof}
\bigskip

{\small \noindent{\bf Acknowledgments.} D.J.A. is partially supported by Conselho Nacional de Desenvolvimento Cient\'ifico e Tecnol\'ogico and Para\'iba State Research Foundation (FAPESQ) grant 2019/0014. D.J.A. thanks the Abdus Salam International Centre for Theoretical Physics (ICTP) for their hospitality during
his research visits. B.S. is partially supported by Funda\c c\~ao Carlos Chagas Filho de Amparo \`a Pesquisa do Estado do Rio de Janeiro (FAPERJ) grant E-26/203-015/2017; and Conselho Nacional de Desenvolvimento Cient\'ifico e Tecnol\'ogico (CNPq) grants 427056/2018-7, 310989/2018-3.}

%%%%%%%%%%%%

\vspace{1cm}
\noindent \textsc{Dami\~ao J. Ara\'ujo} \hfill \textsc{Boyan Sirakov}\\
Universidade Federal da Para\'iba\hfill  Pontif\'icia Universidade Cat\'olica - PUC\\
Department of Mathematics \hfill Department of Mathematics \\
Jo\~ao Pessoa, PB 58059-900 \hfill Rio de Janeiro, RJ 22451-900\\
Brazil \hfill Brazil \\
\texttt{araujo@mat.ufpb.br} \hfill
\texttt{bsirakov@mat.puc-rio.br}

\end{document}